\numberwithin{equation}{section}
\theoremstyle{plain}
\newtheorem{theo}{Theorem}[section]
\newtheorem{lem}{Lemma}[section]
\newtheorem{prop}{Proposition}[section]
\theoremstyle{definition}
\newtheorem{defin}{Definition}[section]
\newtheorem{rem}{Remark}[section]
\newtheorem{ex}{Example}[section]
\newcommand{\gld}{\mathfrak{gl}(\mathfrak{d})}
\newcommand{\mfd}{\mathfrak{d}}
\newcommand{\mfs}{\mathfrak{s}}
\newcommand{\mfL}{\mathfrak{L}}
\newcommand{\sld}{\mathfrak{sl}(\mathfrak{d})}
\newcommand{\mfT}{\mathfrak{T}}
\newcommand{\der}{\partial}
\newcommand{\C}{\mathbb{C}}
\newcommand{\nicesum}[1]{\displaystyle \sum_{#1} }
\begin{document}
	
	\title[A bound on the degree of singular vectors for $E(5,10)$]{A bound on the degree of singular vectors for the exceptional Lie superalgebra $E(5,10)$}

	\author{Daniele Brilli}
	\address{Dipartimento di Matematica "Guido Castelnuovo", Universita' di Roma "La Sapienza", Piazzale Aldo Moro 5, Roma, Italia}
	
	\begin{abstract}
		We use the language of Lie pseudoalgebras to gain information about the representation theory of the simple infinite-dimensional linearly compact Lie superalgebra of exceptional type $E(5,10)$. This technology allows us to prove that the degree of singular vectors in minimal Verma modules is $\leq 14$. A few technical adjustments allow us to refine the bound, proving that the degree must always be $\leq 12$ and it is actually, except for a finite number of cases, $\leq 10$.
	\end{abstract}
	
	\date{June 22, 2020}
	
	\maketitle
	\section{Introduction}
	Infinite dimensional linearly compact simple Lie superalgebras over $\C$ were classified by Kac in \cite{K}. Besides Lie algebras in Cartan's list \cite{C}, the complete list consists of ten families and five exceptions, denoted by $E(1,6)$, $E(2,2)$, $E(3,6)$, $E(3,8)$, $E(4,4)$ and $E(5,10)$.\\
	In \cite{KR1} Kac and Rudakov started the study of representations of these exceptional superalgebras following the approach developed by Rudakov for the "non super" case, establishing the language of \emph{generalized Verma modules} and reducing the problem to the description of the so-called \emph{degenerate} modules and the study of \emph{singular vectors}.\\
	They completed the classification of degenerate Verma modules and singular vectors for $E(3,6)$ in \cite{KR1,KR2,KR3}. In the meanwhile they started to investigate the cases of $E(3,8)$ and $E(5,10)$ in \cite{KR4}. For $E(3,8)$ they could apply most of the arguments from \cite{KR2} and found all the degenerate Verma modules; they also described many degenerate $E(5,10)$-modules and conjectured there were no others.\\
	Afterwards Rudakov in \cite{R} related the problem with the study of morphism between Verma modules. He defined a degree of such a morphism and classified all the morphisms of degree $1$ (which correspond to the degenerate modules found in \cite{KR4}) but he also obtained morphisms of degree $2$ and $3$ as products of morphisms of degree $1$ and found morphisms of degree $4$ and $5$. He then conjectured there were no morphisms of higher degree and that his list was exhaustive.\\
	Cantarini and Caselli in \cite{CC} developed some combinatorial aspects of morphisms between Verma modules for $E(5,10)$ that allowed them, in particular, to confirm part of Rudakov's conjecture. Indeed, they showed that the morphisms found by Rudakov up to degree $3$ were the only ones.\\
	
	We briefly recall the definitions of generalized Verma module and singular vector.\\
	
	$E(5,10)=\oplus_{i\geq -2} \mfL_i$ is equipped with a $\mathbb{Z}$-grading of depth 2 consistent with the Lie superalgebra structure. In particular, $\mfL_0$ is a Lie algebra isomorphic to $\mathfrak{sl}_5$. If we take an ${sl}_5$-module $V$ and allow $\mfL_+=\oplus_{i>0} \mfL_i$ to act trivially on it, we obtain a module over $\mfL_{\geq 0}=\oplus_{i\geq0} \mfL_i$. We can then consider the induced $\mfL$-module
	\[
	\mfT(V)=U(\mfL)\otimes_{U(\mfL_{\geq 0})} V
	\]
	where $U(\mfL)$ is the universal enveloping algebra of $\mfL$.\\
	
	$\mfT(V)$ is called \emph{generalized Verma module}. One says it is \emph{minimal} if $V$ is an irreducible $\mathfrak{sl}_5$-module. If $\mfT(V)$ is minimal but not irreducible, it is said to be \emph{degenerate}.\\
	It is possible to define a grading on $\mfT(V)=\oplus_{p\geq 0} \mfT^p(V)$ compatible with that of $\mfL$, meaning that $\mfL_i \cdot \mfT^p(V)\subset \mfT^{p-1} (V)$.\\
	A \emph{singular vector} is an element of $\mfT(V)$ that is killed by $\mfL_+$.\\
	The notions of degeneracy of a minimal Verma module, existence of (non constant) singular vectors and (positive degree) morphisms are all equivalent (see, for instance, \cite[Prop. 3.5]{CC}).\\
	Basically, if one has a positive degree singular vector in a minimal Verma module $\mfT(V)$, the $\mfL$-submodule it generates is a proper submodule, thus $\mfT(V)$ is degenerate. On the other way, a $\mfL$-morphism of positive degree $p$ maps constant vectors, which are automatically singular, to singular vectors of positive degree $p$.\\
	
	Despite a visible fair amount of understanding of these objects, an explicit bound on the degree of singular vectors (or equivalently morphisms) in the literature is only implicitly conjectured. This article aims to fill this gap and provide such a bound, with the hope that the techniques developed will allow to improve the result and help "attack" these conjectures from "above" as well as from "below".\\
	
	The idea is the following: the even part of $E(5,10)$ is $S(5)$, the Lie algebra of zero-divergence vector fields in five indeterminates. This algebra is isomorphic to the \emph{annihilation algebra} of a \emph{Lie pseudoalgebra}. Lie pseudoalgebras are Lie algebras in a \emph{pseudotensor category}, hence their name (see \cite{BD,BDK1}). The analogs of Verma modules are called \emph{tensor modules} in this language. A structural correspondence between a Lie pseudoalgebra and its annihilation algebra guarantees a close bond between their representation theories (see \cite[Section 13]{BDK1}).
	(Finite) pseudoalgebras have the upside of having a developed theory (\cite{BDK1,BDK2,BDK3,BDK4,D}) and allow one to talk about singular vectors and their degrees in quite a "manageable" way. For istance, we know that the degree of singular vectors in tensor modules for Lie pseudoalgebras of type S is at most $2$, (Theorem \ref{theorem degrees tensor modules}, see \cite[Section 7]{BDK2} for more details).\\
	The fact that the pseudoalgebraic structure associated with the even part of $E(5,10)$ has such an immediate bound, together with the finiteness of the odd degree, prevent vectors with high enough degree from being singular.  \\
	
	The paper is organized as follows: in Section \ref{sectionPreliminariesE(5,10)} we recall the basic definitions about $E(5,10)$, generalized Verma modules and singular vectors and set up the notation. In Section \ref{sectionPreliminariesLiepseudoalgebras} we recall the basic notions about Lie pseudoalgebras, annihilation algebras and the correspondence between their representation theory. Section \ref{sectionPrimitiveLiepseudoalgebras} is dedicated to a brief summary of results about representation theory of primitive Lie pseudoalgebras of type $W$ and $S$. In Section \ref{sectionBoundSingularvectors} we build a finite filtration of $S(5)$-submodules on a generalized Verma module and realize their quotients as tensor modules, which allows us to apply the results stated in the previous section and prove our first result. Finally, in Section \ref{sectionBoundRefining} we use a very easy technical lemma to refine the bound.

	\section{E(5,10), Verma modules and notation}\label{sectionPreliminariesE(5,10)}
	We deal with the simple linearly compact Lie superalgebra of exceptionaly type $\mfL =E(5,10)$ and we will use the geometric construction provided in \cite[5.3]{CK}\\
	Let $\mfd = (\C^5)^\ast$ and let $\{ \der_1 ,\dotsc, \der_5\}$ and $\{x_1, \dotsc, x_5\}$ be bases for respectively $\mfd$ and $\mfd^\ast$.\\
	We can realize the even part of $\mfL$ as zero-divergence vector fields in the indeterminates $x_1, \dotsc, x_5$,
	\begin{equation*}
	\mfL_{(0)} = S(5) = \Big\{ D = \nicesum{i=1}^{5} f_i \der_i \, | \, f_i\in C[[x_1, \dotsc, x_5]], \, div(D)=0\Big\},\\
	\end{equation*}
	and the odd part as closed 2-forms in the same indeterminates
	\begin{align*}
	\mfL_{(1)} = d\Omega^1(5) = \Big\{& \omega = \nicesum{i<j=1}^5 f_{ij}\xi_{ij}\, | \, f_{ij}\in \C[[x_1,\dotsc, x_5]], \, \xi_{ij}= \dd x_i\wedge \dd x_j ,\\
	 &\omega = \dd\alpha \text{ for some } \alpha\in \Omega^1(5)\Big\}.
	\end{align*}
	The bracket between even elements is the one of the Lie algebra $S(5)$, while the brackets between even and odds elements are given by the Lie derivative
	\begin{equation*}
	[D,\omega] = \mathcal{L}_D(\omega) = \dd(\iota_D(\omega)),
	\end{equation*}
	where $\iota_D(\omega)$ is the contraction of $\omega$ by $D$.\\
	Finally, the bracket between odd elements is given by wedge product and consequent identification with a zero-divergence vector field:
	\begin{equation*}
	[\omega_1,\,\omega_2] = D \qq{where D is s.t.} \iota_D(v) = \omega_1\wedge\omega_2,
	\end{equation*}
	where $v$ is a fixed volume form. $\dd \omega_1 \wedge \dd\omega_2$ is a closed form and it is easy to check that under this identification it corresponds to a zero-divergence vector field.\\
	
	In order to give an explicit formula, for $i,j,h,k\in\{1,\dotsc, 5\}$ we set
	\[
	(ijhk)= \begin{cases}
	0 \quad\text{ if } |\{i,j,h,k\}|<4 \\
	l \quad\text{ otherwise, where } \{i,j,h,k, l\} = \{1,\dotsc, 5\};
	\end{cases}
	\]
	we shall adopt the following convention: whenever such an index occurs in an expression and it takes on the value 0, so does the expression.\\
	
	This way the bracket between two odd elements can be defined as
	\begin{equation}\label{bracketodd}
	[\xi_{ij},\xi_{hk}] = \varepsilon_{(ijhk)}\der_{(ijhk)}
	\end{equation}
	and then extended by $\C[[x_1,\dotsc,x_5]]$-bilinearity, where  $\varepsilon_{(ijhk)}$ is the sign of the permutation $(ijhkl)$ if $(ijhk)=l\neq 0$, $0$ otherwise; in a similar way $\der_{(ijhk)}=\der_l$ if $(ijhk)=l\neq0$, $\der_{(ijhk)}=0$ otherwise.\\
	
	Setting $deg\, x_i= -deg\, \der_i = 2$ and $deg\, \xi_{hk}=-1$ provides $\mfL=\bigoplus_{i\geq -2}\mfL_{i}$ with a transitive, irreducible $\mathbb{Z}$-grading of depth $2$ consistent with the superalgebra structure, which means that
	\begin{equation}
	[\mfL_n,\mfL_m]\subseteq \mfL_{n+m}
	\end{equation}
	We'll call $\mfL_-=\mfL_{-2}\oplus\mfL_{-1}$, $\mfL_{+}=\bigoplus_{i>0} \mfL_i$ and $\mfL_{\geq 0} = \mfL_{0}\oplus \mfL_{+}$.\\
	We have an isomorphism between $\mfL_0$ and $\sld$ given by
	\begin{equation}
	 x_i\der_j\longmapsto - e^i_j.
	\end{equation}
	Given the fact that $[\mfL_0,\mfL_n]\subseteq \mfL_n$, we can view $\mfL_n$ as an $\sld$-module; in particular, $\mfL_{-2}\cong \mfd$ and $\mfL_{-1} \cong \bigwedge^2 \mfd^\ast =: \mfs$.\\
	It is useful to describe also $\mfL_1$ as a $\sld$-module: it is the highest weight representation in $\mfd^\ast \otimes \bigwedge^2 \mfd^\ast$, (see \cite[Section 4.3]{CK}) and it is generated by the highest weight vector $x_1 \xi_{12}$.\\
	Notice that $\mfL_j = \mfL_1^j$ for $j\geq 1$ and that $\mfL_-\cong \mfd \oplus \mfs$ is a finite dimensional Lie superalgebra whose superbracket is non trivial only when restricted to the odd part, where is given by \eqref{bracketodd}.\\
	
	This grading extends to the universal enveloping algebra $U(\mfL)$, and in particular to $U(\mfL_-)$. In the latter case, as common practice, the sign of the degree is inverted in order to have a grading over $\mathbb{N}$.\\
	
	We will use this grading to study generalized Verma modules, which we will introduce here following \cite{R}(see also \cite{KR1,KR2,CC}).\\
	
	Given a $\sld\cong\mfL_0$-module $V$, we can extend it to a $\mfL_{\geq 0}$-module by letting $\mfL_{+}$ act trivially on it; we can then consider the induced $\mfL$-module
	\[
	\mfT(V)= U(\mfL)\otimes_{U(\mfL_{\geq 0})} V
	\]
	where the action is given by left multiplication. 
	\begin{defin}
		Let $V$ be a $\sld$-module. The $\mfL$-module $\mfT(V)$ is called \emph{generalized Verma module}.\\
		If $V$ is a finite-dimensional and irreducible $\sld$-module, we call $\mfT(V)$ \emph{minimal}.\\
		A minimal Verma module is called \emph{non-degenerate} if it is irreducible, \emph{degenerate} otherwise.
	\end{defin}
	
	\begin{rem}\label{remT(V)}
		Let us notice that, as vector spaces, $\mfT(V)\cong U(\mfL_-)\otimes V$. We will often use this isomorphism omitting the subscript $U(\mfL_{\geq 0})$  on the tensor product.\\
	\end{rem}
	When $V=V(\lambda)$ is an irreducible $\sld$-module of highest weight $\lambda$,  we may use the notation $\mfT(V) = \mfT(\lambda)$. A dominant weight $\lambda$ for $\sld$ will be expressed in terms of a quadruple $[a_1,a_2,a_3,a_4]\in \mathbb{N}^4$ where $\lambda = a_1 \omega_1 + \dotsb + a_4 \omega_4$ and $\omega_1 , \dotsc, \omega_4$ are the fundamental weights of $\sld$.\\
	One should pay special attention because, since we set $\mfd=(\C^5)^\ast$, all the highest weights modules are the duals of the $\mathfrak{sl}_5$ usual ones. For example, in our notation the $\mathfrak{sl}_5$ standard representation, which has highest weight $[1,0,0,0]$, will be $\mfd^\ast$.\\
	
	The grading of $U(\mfL_-)$ induces one on $\mfT(V)$.\\
	Poincar\'{e}-Birkhoff-Witt Theorem is still true in the superalgebra setting (see \cite[6.1]{M}), so fixed an ordered basis $\{\der_1\dotsc, \der_5, \xi_{12},\dotsc, \xi_{45}\}$ of $\mfL_-$, we can choose as a $PBW$-basis for $U(\mfL_-)$ the monomials $\der^{(I)}\xi^{K}$ where, 
	\begin{align}
	\der^{(I)} = \frac{\der_1^{i_1}}{i_1!}\cdots \frac{\der_5^{i_5}}{i_5!}\label{d^I}, \quad &I=(i_1, \dotsc, i_5)\in \mathbb{N}^5\\
	\xi^K =\xi_{12}^{k_{12}}\cdots\xi_{45}^{k_{45}}, \quad &K=(k_{12},\dotsc, k_{45})\in \{ 0,1\}^{10}.
	\end{align}
	The basis elements are, by definition of the grading, homogeneous of degree $p = 2 |I| + |K|$, where $|I|=i_1+ \dotsb + i_5$ and $|K|=k_{12}+\dotsb k_{45}$; they generate the homogeneous subspaces $U^p(\mfL_-)$. We can thus equip $\mfT(V)$ with a grading $\mfT^p(V)= U^p(\mfL_-)\otimes V$.\\
	We should notice that this grading and the grading of $\mfL$ are compatible, by which we mean that
	 \begin{equation}\label{compatiblegradings}
	 \mfL_n \mfT^p(V)\subseteq \mfT^{p-n}(V).
	 \end{equation}
	We will call the elements of $\mfT^p(V)$ \emph{homogeneous vectors of degree $p$}; in particular, we will call the degree $0$ ones \emph{constant}.\\
	For istance, $\mfT^0(V) = \C \otimes V$, $\mfT^1(V) = \mfs \otimes V$, $\mfT^2(V)= \mfd \otimes V + \bigwedge^2(\mfs) \otimes V$, $\mfT^3(V) = \mfd \mfs \otimes V + \bigwedge^3(\mfs)\otimes V$, etc.\\
	If $v\in S^n(\mfd)\bigwedge^m(\mfs)\otimes V$, if we want to keep track of the even and odd degrees, we will say that $v$ has degree $(n|m)$.\\
	
		Degeneracy of Verma modules can be reformulated in terms of \emph{singular vectors}.
	\begin{defin}
		Let $\mfT(V)$ be a Verma module. $v\in \mfT(V)$ is called a \emph{singular vector} if $\mfL_1v=0$.\\
		The space of singular vectors will be denoted by $sing \, \mfT(V)$.
	\end{defin}
	\begin{ex}
		Any constant vector $v\in \mfT^0(V)$ is singular, since in that case $\mfL_1 v \in \mfT^{-1}(V)=0$.
	\end{ex}
	\begin{rem}
		Take $v\in sing \, \mfT(V)$ and $z\in \mfL_0$. Then, for any $y\in \mfL_1$, $yv=0$ and so
		\[
		y(zv)=[y,z]v + z(yv) = [y,z]v=0
		\]
		where the last identity follows from the fact that $[\mfL_1,\mfL_0]\subseteq \mfL_1$ and the singularity of $v$.\\
		In other terms, $sing\, \mfT(V)$ is a $\mfL_0$-submodule of $\mfT(V)$. In particular, since $\mfT(V)=\bigoplus_p \mfT^p(V)$ as $\mfL_0$-modules, homogeneous components of a singular vector are singular. We will always assume that a singular vector is homogeneous.\\
		The same holds for the weight components of a singular vector. So we will, whenever possible, assume that a singular vector is also a weight vector.
	\end{rem}
	
	\begin{prop}\label{sing-degenerate}
		A minimal Verma module $\mfT(V)$ is degenerate if and only if it contains non constant singular vectors.
	\end{prop}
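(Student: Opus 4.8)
The plan is to prove the two implications separately, using throughout the compatibility \eqref{compatiblegradings} of the $\mfL$-grading with the degree grading of $\mfT(V)$, together with the fact (Remark \ref{remT(V)}) that $\mfT(V)\cong U(\mfL_-)\otimes V$ is generated over $U(\mfL_-)$ by its constant part $\mfT^0(V)=V$.

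For the implication ``non-constant singular vector $\Rightarrow$ degenerate'', suppose $v$ is a non-constant singular vector, which by the preceding remark I may take homogeneous of degree $p\geq 1$. First I would upgrade $\mfL_1 v=0$ to $\mfL_+ v=0$: since $\mfL_j=\mfL_1^j$, an easy induction gives $\mfL_j v=0$ for all $j\geq 1$, because $\mfL_j=[\mfL_1,\mfL_{j-1}]$ and, for $y\in\mfL_1$ and $w\in\mfL_{j-1}$, one has $[y,w]v=y(wv)\pm w(yv)=0$ (the super-sign being irrelevant as both terms vanish). Then, using the PBW factorization $U(\mfL)=U(\mfL_-)U(\mfL_0)U(\mfL_+)$ and $U(\mfL_+)v=\C v$, the submodule generated by $v$ is $U(\mfL)v=U(\mfL_-)U(\mfL_0)v$. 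Since $\mfL_0$ preserves the degree and $\mfL_-$ raises it, this submodule lies in $\bigoplus_{q\geq p}\mfT^q(V)$, which misses $\mfT^0(V)=V\neq 0$ because $p\geq 1$; hence it is a proper nonzero submodule and $\mfT(V)$ is degenerate.

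For the converse, let $M\subsetneq\mfT(V)$ be a nonzero proper submodule. The first step is to note that $M\cap\mfT^0(V)=M\cap V$ is an $\mfL_0$-submodule of the irreducible $\mfL_0$-module $V$, hence is $0$ or $V$; it cannot be $V$, since then $M\supseteq U(\mfL_-)V=\mfT(V)$, contradicting properness, so $M\cap V=0$. Next I would set $N=\min\{\,p\geq 0 : M\cap\bigl(\bigoplus_{q\leq p}\mfT^q(V)\bigr)\neq 0\,\}$, which satisfies $N\geq 1$ because $M\cap V=0$. Picking $0\neq m\in M$ with all components in degrees $\leq N$, minimality of $N$ forces the top component $m_N$ to be nonzero. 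As $\mfL_1$ lowers degree by one, $\mfL_1 m\in M\cap\bigl(\bigoplus_{q\leq N-1}\mfT^q(V)\bigr)=0$, so $m$ is singular; reading the degree-$(N-1)$ part of the relation $\mfL_1 m=0$ yields $\mfL_1 m_N=0$, so $m_N\in\mfT^N(V)$ with $N\geq 1$ is the desired non-constant (homogeneous) singular vector.

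The routine parts are the PBW factorization and the degree bookkeeping. The one point deserving genuine care is the implication $\mfL_1 v=0\Rightarrow\mfL_+ v=0$ in the first direction, where one must track the super-bracket and know that $\mfL_+$ is generated by $\mfL_1$; the other delicate verification is that $M\cap V$ is $0$ rather than $V$. I expect the cleanest feature of the converse to be the minimization over the filtration $\bigoplus_{q\leq p}\mfT^q(V)$, which produces a homogeneous singular vector directly and so sidesteps any need to first prove that arbitrary submodules of $\mfT(V)$ are graded.
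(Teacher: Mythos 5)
Your proof is correct and follows essentially the same route as the paper's: in the forward direction a homogeneous singular vector of positive degree generates, via the grading, a nonzero submodule contained in $\bigoplus_{q\geq p}\mfT^q(V)$ and hence proper, while the converse extracts a singular vector from a proper submodule and uses irreducibility of $V$ to rule out constancy. The only differences are organizational: you spell out the step $\mfL_1 v=0\Rightarrow \mfL_+ v=0$ (which the paper treats as part of the assumption, justified by $\mfL_j=\mfL_1^j$), and in the converse you minimize over the filtration $\bigoplus_{q\leq p}\mfT^q(V)$ instead of applying powers of $\mfL_1$ to an arbitrary nonzero element of the submodule, as the paper does --- two equivalent ways of running the same degree argument.
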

	\begin{proof}
		Assume $0\neq v \in sing \mfT^p(V)$ for some $p>0$. Since $\mfL_-$ and $\mfL_0$ can only respectively rise the degree of $v$ or preserve it and since $\mfL_+.v=0$ by assumption, $\mfL v$ is an $\mfL$-submodule of $\mfT(V)$, which is proper because it only contains vectors of degree $\geq p$.\\
		Vice versa, let $W\subset \mfT(V)$ be a non trivial proper $\mfL$-submodule and take $0\neq w \in W$. Since the action of $\mfL_1$ lowers strictly the degree of homogeneous components of $w$, we know that eventually $(\mfL_1)^{n}w=0$ for some finite $n\geq 1$; thus we can assume without loss of generalization that $w$ is singular. Now, if $w$ was constant, by irreducibility of $V$ we would have $\mfL_0 w = \C\otimes V$ and therefore, by iterated action of $\mfL_-$, we would obtain $\mfL w = \mfT(V)$. But $\mfT(V)\supsetneq W \supseteq \mfL w = \mfT(V)$, a contradiction. Hence $w$ is a non constant singular vector. 
	\end{proof}
	The proof of the proposition shows vividly how singular vectors "detect" degeneracy of minimal Verma modules.\\
	
	\begin{ex}\label{exsing}
		Let $V=V([0,0,0,1])\cong \mfd$ and let $ v = \nicesum{i} \xi_{1i} \otimes \der_i\in \mfT(\mfd) $.\\
		A generic element of $\mfL_1$ is of the form $y = x_h \xi_{kl}+x_k \xi_{hl}$ for some $h,k,l\in \{1,\dotsc, 5\}$.\\
		To check if $v$ is singular, we can carry out the computation:
		\begin{align*}
		y \cdot v = &\nicesum{i} x_h[\xi_{kl},\xi_{1i}]\otimes \der_i + x_k[\xi_{hl},\xi_{1i}]\otimes \der_i =\\
		&\nicesum{i} \varepsilon_{(kl1i)} \otimes (x_h\der_{(kl1i)})\der_i + \varepsilon_{(hl1i)} \otimes (x_k\der_{(hl1i)})\der_i=\\
		&\nicesum{i} -\varepsilon_{(kl1i)} \otimes e^h_{(kl1i)}\der_i - \varepsilon_{(hl1i)} \otimes e^k_{(hl1i)}\der_i=\\
		&\nicesum{i} -\varepsilon_{(kl1i)} \otimes \delta^h_i \der_{(kl1i)} - \varepsilon_{(hl1i)} \otimes \delta^k_i \der_{(hl1i)}=\\
		&-\varepsilon_{(kl1h)}\otimes \der_{(kl1h)} - \varepsilon_{(hl1k)}\otimes \der_{(hl1k)}=\\
		&-\varepsilon_{(kl1h)}\otimes \der_{(kl1h)} + \varepsilon_{(kl1h)}\otimes \der_{(kl1h)}=0 .
		\end{align*}
		So $v\in sing \, \mfT^1(V)$.	\footnote{Alternatively, one could have done the following:	notice that $v$ is an highest weight vector, therefore realize that it is sufficient to check that the lowest weight vector of $\mfL_1$, $x_5\xi_{45}$, acts trivially on $v$, which is clearly easier (or at least shorter) (see \cite[Ch.3]{CC}).}
	\end{ex}

	\section{Preliminaries on Lie Pseudoalgebras}\label{sectionPreliminariesLiepseudoalgebras}
	In this section we briefly give the definition of finite Lie pseudoalgebras and of their annihilation algebras and recall their main features following \cite[Section 2]{BDK2}. For a more detailed exposition, see also \cite{BDK1}.\\
	
	First of all we need a few preliminaries on Hopf algebras.\\
	Let $H$ be a cocommutative Hopf algebra with coproduct $\Delta$, counit $\epsilon$ and antipode $S$.\\
	We will use the Sweedler notation (see \cite{S}), for istance given $h\in H$:
	\begin{align*}
	\Delta(h) &=h_{(1)}\otimes h_{(2)};\\
	(\Delta\otimes id)\Delta(h) &= (id\otimes \Delta)\Delta(h)=h_{(1)}\otimes h_{(2)}\otimes h_{(3)};\\
	(id\otimes S)\Delta(h) &= h_{(1)}\otimes h_{(-2)}, \text{ etc.}
	\end{align*}
	Throughout the paper, $H=U(\mfd)$ will be the universal enveloping algebra of a Lie algebra $\mfd$ of dimension $n$.\\
	In this case the coproduct is given by $\Delta(\der)=\der\otimes 1 + 1\otimes \der$ and the antipode by $S(\der)=-\der$ for $\der\in \mfd$.\\
	Let $\der_1, \dotsc , \der_n$ be a basis of $\mfd$ and take the PBW basis of $H$, $\{\der^{(I)}\}_{I\in \mathbb{N}^{n}}$ as in \eqref{d^I}.\\
	With this choice of basis is easy to show that 
	\begin{equation}\label{deltaH}
	\Delta(\der^{(I)})= \nicesum{J+K=I} \der^{(J)}\otimes \der^{(K)}.\
	\end{equation}
	Moreover it lets us define an increasing filtration on $U(\mfd)$
	\begin{equation}
	F^{p} H =span_{\C}\{ \der^{(I)} \; |\; |I|\leq p \}.
	\end{equation}
	Now let $X= H^\ast := Hom(H,\C)$ and let $\{x_I\}$ be a dual basis of $\{ \der^{(I)}\}$, i.e. $\langle x_I, \der^{(J)}\rangle=\delta_I^J$. In particular we indicate the duals of the basis elements of $\mfd\subset H$, $\{\der_i\}$, with $\{x^i\}$, which provides a basis of $\mfd^\ast\subset X$.\\
	$X$ can be viewed as an $H$-bimodule with left and right actions given respectively by
	\begin{align}
	\langle hx, f \rangle &= \langle x, S(h)f\rangle;\label{leftactionHonX}\\
	\langle xh, f \rangle &= \langle x, f S(h)\rangle, \quad \text{for } x\in X,\,h,f\in H.\label{rightactionHonX}
	\end{align}
	Properties of $H$ reflect "dually" on $X$(\cite[Section 2]{BDK1}):
	\begin{itemize}
		\item cocommutativity of $H$ implies commutativity of $X$;\\
		\item from \eqref{deltaH} follows easily that $x_I x_J = x_{I+J}$;\\
		\item using the above, one can identify $X$ with the ring of former power series $\mathcal{O}_n=\C[[x^1, \dotsc, x^n]]$;\\
		\item setting $F_p X=(F^p(H))^{\perp}$ provides a decreasing filtration on $X$.
	\end{itemize}
	Considering the $\{F_p X\}$ as a fundamental system of neighborhoods one can define a topology on $X$ that makes it \emph{linearly compact} (\cite[Chapter 6]{BDK1}) (while considers the discrete topology on $H$). This makes the action of $H$ (and in particular of $\mfd$) on $X$ continuous.\\
	
	\begin{defin}
		A \emph{Lie ($H$-)pseudoalgebra} $L$ is a left $H$-module endowed with a map, called \emph{pseudobracket}
		\begin{equation*}
		[\cdot \ast \cdot] : L \otimes L \rightarrow (H\otimes H)\otimes_H L
		\end{equation*}
		which has the following properties:
		\begin{description}
			\item[H-bilinearity] $[fa\ast g b] = ((f\otimes g)\otimes_H 1)[a \ast b] \quad\forall a,b\in L,\; g,f\in H;$\\
			\item[Skew-commutativity] $[b\ast a] = - (\sigma\otimes_H 1)[a \ast b]\quad \forall a,b\in L$;\\
			\item[Jacobi] $[a \ast [b\ast c]]- ((\sigma\otimes 1)\otimes_H 1)[b \ast [a\ast c]]=[[a\ast b],\ast c]\quad \forall a,b,c\in L$.
		\end{description}
	where $\sigma:H\otimes H \rightarrow H\otimes$, $f\otimes g \mapsto g\otimes f$ is the permutation of factors and the composition of pseudobrackets in the Jacobi identity are suitable defined in $H^{\otimes 3}\otimes_H L$.\\
	A Lie pseudoalgebra is called \emph{finite} if it is finitely generated as a module over $H$.
	\end{defin}
	The name derives from the fact that this is an algebra in a \emph{pseudotensor category} as introduced in\cite{BD} (see also \cite[Chapter 3]{BDK1}).\\
	\begin{ex}
		Take a Lie algebra $\mathfrak{g}$. One can define the $\emph{current}$ Lie $H$-pseudoalgebra $Cur \, \mathfrak{g} = H\otimes \mathfrak{g}$ as free $H$-module with pseudobracket defined as
		\[
		[(1\otimes a)\ast (1\otimes b)]=(1\otimes 1)\otimes_H [a,b] \quad \text{ for } a,b\in \mathfrak{g}
		\]
		and extended then by $H$-bilinearity.
	\end{ex}
	We will focus on what are called \emph{primitive} Lie pseudoalgebras, which we will define in the next section.\\
	\begin{defin}
		A \emph{representation} of $L$, or \emph{$L$-module}, is a left $H$-module $M$ with an $H$-bilinear map
		\begin{equation*}
		\ast : L \otimes M \rightarrow (H\otimes H)\otimes_H M
		\end{equation*}
		such that
		\begin{equation*}
		[a\ast b]\ast m = a\ast (b\ast m) - ((\sigma\otimes 1)\otimes_H 1)(b\ast(a\ast m)) \quad \forall a,b\in L\, , m\in M.
		\end{equation*}
		An $L$-module is called \emph{finite} if it is finitely generated as an $H$-module.\\
		A subspace $N\subset M$ is an \emph{$L$-submodule} if $L\ast N\subset (H\otimes H)\otimes_H N$.\\
		An $L$-module is \emph{irreducible} if it does not contain any nontrivial proper submodules.
	\end{defin}

	The most important tool in the study of Lie pseudoalgebras (and their representations) is the annihilation algebra.\\
	
	Define $\mathcal{A}(L) := X\otimes_H L$, where as before $X=H^\ast$ and the right action of $H$ on $X$ is \eqref{rightactionHonX}.\\
	\begin{defin}
	Given $L$ a Lie $H$-pseudoalgebra, its \emph{annihilation algebra} is the Lie algebra $\mfL=\mathcal{A}(L)$ with Lie bracket
	\begin{equation}\label{annihilationbracket}
	[x \otimes_H a, y \otimes_H b]= \sum(x f_i)(y g_i)\otimes_H l_i \quad \text{ where } \quad [a \ast b]= \sum(f_i\otimes g_i)\otimes_H l_i.
	\end{equation}
	\end{defin}
	$H$ acts on $X\otimes_H L$ by \eqref{leftactionHonX} on the first factor. In this way $\mfd\subset U(\mfd)=H$ acts on $\mfL$ by derivations. The semidirect sum $\mfL^e=\mfd\rtimes \mfL$ is called \emph{extended annihilation algebra}.\\
	
	If $L$ is finite and $L_0$ is a finite-dimensional subspace that generates $L$ as a left $H$-module, we can define a filtration on $\mfL$ induced by the one of $X$, 
	\[F_p \mfL= \{ x\otimes_H a \in \mfL \, | \, x\in F_pX \text{ and } a\in L_0\},\]
	which satisfies:
	\begin{equation}\label{l-shift}
	[ F_n \mfL , F_p\mfL ] \subseteq F_{n+p-l} \mfL \; \text{  and  } \; \mfd(F_p \mfL) \subseteq F_{p-1}\mfL,
	\end{equation}
	where $l$ depends only on the choice of $L_0$. We can correct the $l$ shift by setting $\mfL_p = F_{p+l} \mfL$ so that $[\mfL_p,\mfL_n]\subseteq\mfL_{p+n}$. In particular, $\mfL_0$ is a Lie algebra.\\
	We can carry on the filtration to $\mfL^e$ setting $\mfL^e_p = \mfL_p$.\\
	
	An $\mfL^{e}$-module $V$ is called \emph{conformal} if any $v\in V$ belongs to some 
	\[ker_p V:=\{v\in V \, |\, \mfL_p v =0\}.\]
	
	The next result \cite[Proposition 2.1]{BDK2} we state will be crucial for our purposes.
	\begin{prop}\label{correspondance}
		Any module $V$ over a Lie pseudoalgebra $L$ has a natural structure of a conformal $\mfL^{e}$-module, given by the action of $\mfd$ on $V$ and by
		\begin{equation}\label{(correspondance)}
		(x\otimes_H a) v = \sum\langle x, S(f_i g_{i_{(-1)}})\rangle g_{i_{(2)}}v_i \quad \text{where } a\ast v = \sum (f_i \otimes g_i)\otimes_H v_i
		\end{equation}
		for $a\in L$, $x\in X$, $v\in V$.\\
		Conversely, any conformal $\mfL^{e}$-module $V$ has a natural structure of an $L$-module given by
		\begin{equation}
		a \ast v = \nicesum{I\in \mathbb{N}^n} (S(\der^{(I)})\otimes 1)\otimes_H((x_I\otimes_H a)\cdot v)
		\end{equation}
		Moreover, $V$ is irreducible as a module over $L$ if and only if it is irreducible as a module over $\mfL^e$ (or $\mfL$).
	\end{prop}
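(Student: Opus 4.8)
The plan is to realize the two displayed formulas as mutually inverse constructions and then to match invariant subspaces on the two sides; essentially all of the work is Hopf-algebraic bookkeeping in Sweedler notation. I would begin with the forward map. Writing $a\ast v=\sum(f_i\otimes g_i)\otimes_H v_i$, the first point is that \eqref{(correspondance)} descends to $\mfL=X\otimes_H L$, i.e.\ that it gives the same operator on $V$ when the pair $(xh,a)$ is replaced by $(x,ha)$. This is immediate: the pairing \eqref{rightactionHonX} gives $\langle xh,-\rangle=\langle x,-S(h)\rangle$, the anti-homomorphism property yields $S(f_ig_{i(-1)})S(h)=S(hf_ig_{i(-1)})$, and left $H$-linearity of $\ast$ gives $(ha)\ast v=\sum(hf_i\otimes g_i)\otimes_H v_i$, so both expressions reduce to $\sum\langle x,S(hf_ig_{i(-1)})\rangle g_{i(2)}v_i$. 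The technical heart is then the Lie-module identity $[A,B]v=A(Bv)-B(Av)$ for $A=x\otimes_H a$, $B=y\otimes_H b$: the right-hand side is read off from the annihilation bracket \eqref{annihilationbracket}, while the left-hand side, after two applications of \eqref{(correspondance)}, is rewritten by means of the $L$-module axiom $[a\ast b]\ast m=a\ast(b\ast m)-((\sigma\otimes1)\otimes_H1)(b\ast(a\ast m))$, the two sides being reconciled by a mechanical manipulation of coproduct legs using cocommutativity of $H$ and $S(h_{(1)})h_{(2)}=\epsilon(h)$.

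Next I would establish conformality. Since $a\ast v$ has finitely many terms and each $f_i,g_i$ lies in a fixed $F^qH$, the element $S(f_ig_{i(-1)})$ lies in a fixed $F^qH$, so the pairing $\langle x,S(f_ig_{i(-1)})\rangle$ vanishes as soon as $x\in F_pX$ with $p\geq q$. Letting $a$ range over a basis of the generating subspace $L_0$ produces a uniform bound, whence $\mfL_pv=F_{p+l}\mfL\,v=0$ for $p\gg0$; thus every $v$ lies in some $\ker_pV$ and the resulting $\mfL^e$-module, with $\mfd$ acting by the prescribed derivations, is conformal. One also checks that this $\mfd$-action agrees with the one induced through the left $H$-action \eqref{leftactionHonX} on the first factor of $X\otimes_H L$, so that the semidirect structure of $\mfL^e$ is respected.

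For the converse, conformality makes the sum $\sum_I(S(\der^{(I)})\otimes1)\otimes_H((x_I\otimes_H a)\cdot v)$ finite, because $(x_I\otimes_H a)\cdot v=0$ once $|I|$ is large; $H$-bilinearity and the pseudomodule axiom then follow by dualizing the computations of the forward direction. That the two constructions are mutually inverse reduces to the reproducing identity $\sum_I\langle x_I,h\rangle\der^{(I)}=h$ and its $S$-twisted form, obtained by resumming against \eqref{deltaH}. Finally, since each action is reconstructed from the other, a subspace of $V$ is an $L$-submodule exactly when it is an $\mfL^e$-submodule, and for a conformal module these coincide with the $\mfL$-submodules (the $\mfd$-stability being automatic); the three notions of irreducibility therefore agree.

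The step I expect to be the real obstacle is the Lie-module identity: converting the $L$-module Jacobi axiom into the bracket relation for $\mfL$ forces one to track several nested Sweedler summations through the antipode, where a misplaced coproduct leg or sign is easy to incur. I would keep this under control by first testing the identity on $Cur\,\mathfrak{g}$ and then verifying the general case on generators $x_I\otimes_H a$, for which \eqref{deltaH} renders every pairing completely explicit.
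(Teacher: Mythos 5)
First, a point of order: the paper contains no proof of Proposition \ref{correspondance} at all --- it is imported verbatim from \cite[Proposition 2.1]{BDK2}, whose proof in turn rests on Section 9 of \cite{BDK1} --- so your attempt can only be measured against the argument in that source. Against that benchmark, your first three paragraphs follow essentially the standard route: descent of \eqref{(correspondance)} along the relation $xh\otimes_H a = x\otimes_H ha$ using \eqref{rightactionHonX} and the anti-homomorphism property of $S$; verification of $[A,B]v = A(Bv)-B(Av)$ by feeding the module axiom for $\ast$ into the annihilation bracket \eqref{annihilationbracket}; conformality from the finite-dimensionality of $L_0$ (note this uses that $L$ is \emph{finite}, as the surrounding section assumes) together with $F_pX=(F^pH)^{\perp}$; finiteness of the reconstruction sum from conformality; and the dual-basis identity $\sum_I \langle x_I,h\rangle \der^{(I)}=h$ for mutual inverseness. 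One mechanical step is missing from your well-definedness check: since the expansion $a\ast v=\sum (f_i\otimes g_i)\otimes_H v_i$ is not unique, you must also verify that the right-hand side of \eqref{(correspondance)} is unchanged under the defining relation $(f\otimes gh)\otimes_H v=(f\otimes g)\otimes_H hv$ of $(H\otimes H)\otimes_H V$; you only treat the relation in the $X\otimes_H L$ variable. The verification uses the same toolkit ($S(h_{(1)})h_{(2)}=\epsilon(h)1$ plus cocommutativity), but it is a separate, necessary check.

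The genuine gap is your final irreducibility paragraph. The equivalence ``$N$ is an $L$-submodule iff $N$ is an $\mfL^e$-submodule'' is fine, provided one remembers that an $L$-submodule is by convention $H$-stable (the paper's wording is looser than it should be on this point), and it yields the equivalence of irreducibility over $L$ and over $\mfL^e$. But your further claim that for a conformal module the $\mfL^e$-submodules coincide with the $\mfL$-submodules, ``the $\mfd$-stability being automatic,'' is false in general, so the parenthetical ``(or $\mfL$)'' does not follow from anything you wrote. Concretely, take $H=\C[\der]$ (so $\mfd$ is one-dimensional abelian), $L=Cur\,\mathfrak{sl}_2$, and $V=H\otimes U$ the current module over the standard module $U$: then $V$ is irreducible over $L$, hence over $\mfL^e$; but in the annihilation algebra $\mfL\cong \mathfrak{sl}_2[[t]]$ formula \eqref{(correspondance)} gives $\bigl(x\otimes_H(1\otimes a)\bigr)(1\otimes u)=\langle x,1\rangle\,(1\otimes au)$, so $\C\otimes U$ is a proper nonzero $\mfL$-submodule, visibly not $\der$-stable. (Ironically, $Cur\,\mathfrak{g}$ is exactly the test case you proposed for debugging, and it is where the claim breaks.) Hence the equivalence of $\mfL$- and $\mfL^e$-irreducibility is an additional statement requiring structure specific to the pseudoalgebras at hand: for $W(\mfd)$ and $S(\mfd)$ with $\mfd$ abelian, as used in this paper, one can for instance check that $\tilde{\der}:=\der+1\otimes\der$ centralizes $\mfL$, so $\mfL^e=\tilde{\mfd}\oplus\mfL$ with $\tilde{\mfd}$ central, and a Schur-type lemma (valid for countable-dimensional irreducibles) forces $\tilde{\mfd}$ to act by scalars, making every $\mfL$-submodule of an irreducible automatically $\mfL^e$-stable; alternatively one can route the argument through the normalizer decomposition $\mathcal{W}^e=\mfd\oplus\mathcal{N}_{\mathcal{W}}$ recalled later in the paper. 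Some argument of this kind must be supplied before the last sentence of the proposition can be considered proved.
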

\newpage
	
	\section{Primitive Lie pseudoalgebras of type W and S}\label{sectionPrimitiveLiepseudoalgebras}
	We now define the Lie pseudoalgebra $W(\mfd)$ and its subalgebra $S(\mfd)$ and apply the constructions of the previous section.
	Again, we will follow \cite[]{BDK2}.\\
	\begin{defin}
		The Lie pseudoalgebra $W(\mfd)$ is the free $H$-module $H\otimes \mfd$ with pseudobracket
		\begin{equation}
		[(f\otimes a)\ast (g\otimes b)] = (f \otimes g)\otimes_H(1 \otimes [a,b])-(f \otimes ga)\otimes_H(1\otimes b) + (fb\otimes g)\otimes_H(1\otimes a)
		\end{equation}
	\end{defin}
	Define the $H$-linear map $div: W(\mfd)\rightarrow H$ by $div(\sum h_i \otimes \der_i) = \sum h_i\der_i$. Then
	\[
	S(\mfd) := \{ s\in W(\mfd) \, | \, div(s)=0\}
	\]
	is a subalgebra  of the Lie pseudoalgebra $W(\mfd)$.\\
	In \cite[Proposition 8.1]{BDK1} it is shown that $S(\mfd)$ is generated as an $H$-module by the elements of the form:
	\begin{equation}\label{s_ab}
	s_{ab} = a\otimes b - b\otimes a - 1\otimes [a,b] \quad \text{ for } a,b\in \mfd.
	\end{equation}
	Let $\mathcal{W}=\mathcal{A}(W(\mfd))$ be the annihilation algebra of $W(\mfd)$. Since $W(\mfd)=H\otimes \mfd$, 
	\[\mathcal{W}= X\otimes_H (H\otimes \mfd)\equiv X\otimes \mfd.\]
	The Lie bracket of $\mathcal{W}$ is obtained from the pseudobracket of $W(\mfd)$ by \eqref{annihilationbracket}:
	\[
	[x\otimes a, y \otimes b]=xy \otimes[a,b] - x(ya)\otimes b + (xb)y\otimes a \quad \text{for } a,b\in \mfd, \, x,y\in X.
	\]
	The action of $H$ on $\mathcal{W}$ is given by $h(x\otimes a)=hx\otimes a$ and $\mfd$ acts on $\mathcal{W}$ by derivations.\\
	
	Since $W(\mfd)$ is a free $H$-module, we can choose $L_0=\C\otimes \mfd$ and obtain the induced decreasing filtration on $\mathcal{W}$
	\[
	\mathcal{W}_p=F_p\mathcal{W}=F_p X \otimes_H L_0 \equiv F_p X \otimes \mfd.
	\]
	$\mathcal{W}_{-1}=\mathcal{W}$ and it satisfies \eqref{l-shift} for $l=0$; notice also that $\mathcal{W}/\mathcal{W}_0\cong \C\otimes \mfd$ and $\mathcal{W}_0/\mathcal{W}_1\cong\mfd^*\otimes \mfd$.\\
	$H$ can be endowed with a $W(\mfd)$-pseudoaction given by $(f\otimes a) \ast g= -(f \otimes ga)\otimes_H 1$ where $f,g\in H, \, a\in \mfd$; this induces an action of $\mathcal{W}=X\otimes_H W(\mfd)$ on $X\otimes_H H\equiv X$: 
	\begin{equation}\label{actionW(d)onX}
	(x\otimes a)y = -x(ya) \quad \text{ for } a\in \mfd, \, x,y\in X.
	\end{equation}
	Using the fact that $X$ can be identified as $\mathcal{O}_n$ (compatibly with corresponding filtrations and topologies) and that $\mfd$ acts on $X$ by continuous derivations, we can make $\mathcal{W}$ act on $\mathcal{O}_n=\C[[t_1, \ldots t_n]]$ by continuous derivations too. This way we are defining a Lie algebra homomorphism $\varphi:\mathcal{W}\rightarrow W(n)$ where 
	\[
	W(n)=Der(\mathcal{O}_n)=\Big\{\nicesum{i=1}^n f_i \frac{\der}{\der t_i} \,|\, f_i\in \C[[t_1, \dotsc, t_n]] \Big\}
	\]
	$W(n)$ has a natural filtration given by 
	\[
	F_p W(n)=\Big\{\ \nicesum{i} f_i \frac{\der}{\der t_i} \, | \, f_i\in \C[[t_1, \dotsc, t_n]]_k \, , \, k\leq p \Big\}
	\]
	where $\C[[t_1, \dotsc, t_n]]_k$ is the homogeneous component of degree $k$.\\
	
	It is proven in \cite{BDK2} that holds the following:
		\begin{enumerate}
			\item $\varphi$ is an isomorphism of Lie algebras;\\
			\item $\varphi(x \otimes a)=\varphi(x)\varphi(a)$ $\forall x\in X, \, a\in \mfd$;\\
			\item $\varphi(1\otimes \der_i)= - \frac{\der}{\der t_i} $ mod $F_0 W(n)$;\\
			\item $\varphi(\mathcal{W}_p)= F_p W(n)$ $\forall p \geq -1$.\\
		\end{enumerate}

	In what follows we will assume that $dim \mfd = n >2$ (which is okay for us since we will apply it for $n=5$).\\
	Let $\mathcal{S }= \mathcal{A}(S(\mfd)) = X\otimes_H S(\mfd)$ be the annihilation algebra of the Lie pseudoalgebra $S(\mfd)$.\\
	The Lie bracket is the one of $\mathcal{W}$, since the canonical injection of $S(\mfd)$ into $W(\mfd)$ induces a Lie algebra homomorphism $\iota: \mathcal{S}\hookrightarrow \mathcal{W}$.\\
	Explicity, if $s=\sum h_i \otimes \der_i \in S(\mfd)\subset W(\mfd) = H\otimes \mfd$,
	\[
	\iota(x\otimes_H s)=\sum xh_i \otimes \der_i \in\mathcal{W}\equiv X\otimes \mfd
	\]
	Choosing $L_0=span_{\C}\{s_{ab} | \, a,b\in \mfd\}$ (where $s_{ab}$ are the ones defined in \eqref{s_ab}) we get a decreasing filtration of $\mathcal{S}$:
	\[
	\mathcal{S}_p=F_{p+1} \mathcal{S}=F_{p+1} X \otimes_H L_0 \quad \text{ for } p\geq-2.
	\]
	$\mathcal{S}_{-2}=\mathcal{S}$ and it satisfies \eqref{l-shift} for $l=1$.\\
	In \cite[Section 8.4]{BDK1} is proven that $\mathcal{S}\cong S(n)$. But we would also like all the filtrations and related topologies defined on these spaces to be compatible.
	In order to do so, we would like to use $\varphi$ defined before for $\mathcal{W}$ which behaves well related to the filtrations.\\
	This can be done but carefully.\\
	First define a map $div:\mathcal{W}\rightarrow X$ as $div(\sum y_i \otimes \der_i)=\sum y_i \der_i$.\\
	It is not difficult to verify that $div([A,B])= A \, div(B) - B\, div(A)$ $\forall A,B\in \mathcal{W}$ (where the action of $\mathcal{W}$ on $X$ is \eqref{actionW(d)onX}), which implies that
	\[
	\overline{\mathcal{S}}=\{ A\in \mathcal{W} \, | \, div(A)=0\}
	\]
	is a Lie subalgebra of $\mathcal{M}$.\\
	In \cite[Section 3.4]{BDK2} is proven first that $\iota:\mathcal{S}\xrightarrow{\sim}\overline{\mathcal{S}}$ in such a way that $\iota(\mathcal{S}_p)=\overline{\mathcal{S}}\cap\mathcal{W}_p$ $\forall p\geq-1$ [Proposition 3.5], then that $\phi$ maps $\overline{\mathcal{S}}$, up to a Lie algebra automorphism $\psi$ induced by a ring automorphism of $\mathcal{O}_n$, to $S(n)\subset W(n)$ [Proposition 3.6].\\
	Finally a Lie algebra isomorphism 
	\begin{equation}\label{isoS(n)}
	\phi:\mathcal{S}\xrightarrow{\sim} S(n)\subset W(n)
	\end{equation}
	 such that $\mathcal{S}_p$ maps onto $S(n)\cap F_p W(n)$ is obtained [Corollary 3.3]. In particular we have that $\mathcal{S}_{-2}=\mathcal{S}_{-1}=\mathcal{S}$.\\
	
	\begin{rem}\label{remark verma modules as S(d) modules}
		Take a generalized Verma module $\mfT(V)=U(\mfL_{-})\otimes V = U(\mfd + \mfs)\otimes V$. It is in particular an $S(5)$-module and, in view of \eqref{isoS(n)}, also an $\mathcal{S}$-module. Furthermore, considering the action of $\mfd$ as left multiplication in $U(\mfd + \mfs)$, we can view $\mfT(V)$ as a $\mathcal{S}^e$-module. Since all these identifications are compatible with the filtrations, \eqref{compatiblegradings} implies that $\mfT(V)$ is a conformal $\mathcal{S}^e$-module. By Proposition \ref{(correspondance)}, it has a natural structure of $S(\mfd)$-module.
	\end{rem}

Finally, we define what are called \emph{tensor modules} for $W(\mfd)$ and $S(\mfd)$ (which are analogs of Verma modules) and extrapolate from \cite{BDK2} what we need.\\

Recall that $\mathcal{W}_0/\mathcal{W}_1\cong \gld$. So if we take a $\gld$-module $V$, we can allow $\mathcal{W}_1$ to act on it trivially and get a $\mathcal{W}_0$-module. After that, we can induce so to get a $\mathcal{W}$-module.\\
But in order to correlate this with the Lie pseudoalgebra $W(\mfd)$, we need to take account of the action of $\mfd$. To do so, we consider the extended annihilation algebra $\mathcal{W}^e$.\\
We call $\mathcal{N}_{\mathcal{W}}$ the normalizer of $\mathcal{W}_p$ in $\mathcal{W}^e$. In \cite{BDK2} is proven that it is independent of $p$ and that $\mathcal{W}^e=\mfd\oplus\mathcal{N}_{\mathcal{W}}$[Proposition 3.3]. Moreover, $\mathcal{W}_1$ acts trivially on any irreducible finite-dimensional conformal $\mathcal{N}$-module (i.e. modules for which every element is killed by some $\mathcal{W}_p$) and $\mathcal{N}_{\mathcal{W}}/\mathcal{W}_1\cong \mfd \oplus \gld$, so that we have a one-to-one correspondence between irreducible finite-dimensional $\mfd\oplus \gld$-modules and irreducible finite-dimensional conformal $\mathcal{N}_{\mathcal{W}}$-modules [Proposition 3.4].\\

In \cite[Section 3.5]{BDK2} totally analogous results are proven for $\mathcal{S}$, whereas $\mathcal{N}_\mathcal{S}/\mathcal{S}_1\cong \mfd \oplus \sld$.\\

Take a finite-dimensional $\mfd \oplus \gld$-module $V$; letting $\mathcal{W}_1$ act as zero on it, we can define an action of $\mathcal{N}_\mathcal{W}$ and then define the $\mathcal{W}^e$-module $T(V)=Ind_{\mathcal{N}_{\mathcal{W}}}^{\mathcal{W}^e} V =\mathcal{W}^e\otimes_{\mathcal{N}_{\mathcal{W}}} V$ which can be identified as an $H$-module with $H \otimes V$ since $\mathcal{W}^e=\mfd \oplus \mathcal{N}_{\mathcal{W}}$.\\
$T(V)$ is called a \emph{tensor module} for $W(\mfd)$.\\
\begin{defin}
	Let $\mathfrak{g}_1$ and $\mathfrak{g}_2$ be Lie algebras and let $V_i$ be $\mathfrak{g}_i$-modules fro $i=1,2$.\\
	We indicate with $V_1\boxtimes V_2$ the $\mathfrak{g}_1\oplus \mathfrak{g}_2$-module $V_1\otimes V_2$ where $\mathfrak{g}_i$ only acts on the $V_i$ factor.
\end{defin}
If $V$ is of the form $\Pi\boxtimes U$, we will also indicate $T(V)=T(\Pi, U)$.\\
We can define on a tensor module $T(V)=H\otimes V$ a filtration as follows:
\begin{equation}
F^p T(V) = F^p H \otimes V \quad\text{for } p\geq -1
\end{equation}
which behaves nicely relatively to the filtration of $\mathcal{W}$:
\begin{lem}\cite[Lemma 6.3]{BDK2}
	For every $p\geq0$ we have:
	\begin{enumerate}
		\item $\mfd \cdot F^p T(V) \subset F^{p+1} T(V)$;
		\item $\mathcal{N}_{\mathcal{W}}\cdot F^p T(V)\subset F^p T(V)$;
		\item $\mathcal{W}_1 \cdot T(V)\subset F^{p-1}T(V)$.
	\end{enumerate}
\end{lem}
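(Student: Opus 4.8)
The plan is to read off the action of $\mathcal{W}^e$ on the tensor module directly from the induced construction $T(V)=\mathcal{W}^e\otimes_{\mathcal{N}_{\mathcal{W}}}V$. Using the vector-space splitting $\mathcal{W}^e=\mfd\oplus\mathcal{N}_{\mathcal{W}}$ together with the PBW theorem, $U(\mathcal{W}^e)\cong U(\mfd)\otimes U(\mathcal{N}_{\mathcal{W}})$, so that $T(V)\cong U(\mfd)\otimes V=H\otimes V$ and, under this identification, $\mfd$ acts by left multiplication on the $H$-factor: for $\der\in\mfd$ one has $\der\cdot(h\otimes v)=(\der h)\otimes v$. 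Since $\mfd\subset F^1H$ and $F^1H\cdot F^pH\subset F^{p+1}H$, left multiplication by $\mfd$ sends $F^pH\otimes V$ into $F^{p+1}H\otimes V$, which is precisely statement (1). (I read statement (3) as $\mathcal{W}_1\cdot F^pT(V)\subset F^{p-1}T(V)$, the only nontrivial reading.)

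For (2) and (3) the idea is to move the acting element of $\mathcal{N}_{\mathcal{W}}$ (resp.\ $\mathcal{W}_1$) past the $H$-factor until it reaches $V$, controlling the filtration at each commutation. The two structural inputs I need are: since $\mfd$ acts on $\mathcal{W}$ by derivations, \eqref{l-shift} gives $[\mfd,\mathcal{W}_q]\subset\mathcal{W}_{q-1}$; and $\mathcal{W}_1\subset\mathcal{W}_0\subset\mathcal{N}_{\mathcal{W}}$ with $\mathcal{W}_1$ acting as zero on $V$ by construction. I would prove (2) by induction on $p$. The base case $p=0$ is immediate, as $F^0T(V)=1\otimes V$ and $n\cdot(1\otimes v)=1\otimes(nv)\in 1\otimes V$ for $n\in\mathcal{N}_{\mathcal{W}}$. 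For the inductive step, write a PBW generator of $F^pH$ as $\der h'$ with $\der\in\mfd$, $h'\in F^{p-1}H$, and compute in $T(V)$
\[
n\cdot(\der h'\otimes v)=\der\cdot\bigl(n\cdot(h'\otimes v)\bigr)+[n,\der]\cdot(h'\otimes v).
\]
By induction $n\cdot(h'\otimes v)\in F^{p-1}T(V)$, so the first summand lies in $F^pT(V)$ by (1). Decomposing $[n,\der]\in\mathcal{W}^e$ as $e'+n'$ with $e'\in\mfd$ and $n'\in\mathcal{N}_{\mathcal{W}}$, the term $e'\cdot(h'\otimes v)$ lies in $F^pT(V)$ by (1) and $n'\cdot(h'\otimes v)$ lies in $F^{p-1}T(V)\subset F^pT(V)$ by induction, giving (2).

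Statement (3) follows by the same induction, now feeding in (2). The base case uses $\mathcal{W}_1\cdot V=0$, so $w\cdot(1\otimes v)=0\in F^{-1}T(V)$ for $w\in\mathcal{W}_1$. In the inductive step the identity
\[
w\cdot(\der h'\otimes v)=\der\cdot\bigl(w\cdot(h'\otimes v)\bigr)+[w,\der]\cdot(h'\otimes v)
\]
has first summand in $F^{p-1}T(V)$, since $w\cdot(h'\otimes v)\in F^{p-2}T(V)$ by induction and then (1) applies; for the second summand, $[w,\der]=-\der\cdot w\in\mathcal{W}_0\subset\mathcal{N}_{\mathcal{W}}$ by \eqref{l-shift}, whence $[w,\der]\cdot(h'\otimes v)\in F^{p-1}T(V)$ by (2). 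Thus $w\cdot(\der h'\otimes v)\in F^{p-1}T(V)$.

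The inductions themselves are routine; the real content sits in the structural inputs. The main point to pin down is that the derivation action of $\mfd$ strictly lowers the $\mathcal{W}$-filtration, i.e.\ $[\mfd,\mathcal{W}_q]\subset\mathcal{W}_{q-1}$ — this is exactly where \eqref{l-shift} and the value $l=0$ for $\mathcal{W}$ enter — together with $\mathcal{W}_0$, and a fortiori $\mathcal{W}_1$, lying in the normalizer $\mathcal{N}_{\mathcal{W}}$ while $\mathcal{W}_1$ kills $V$. Once these are in place, the only mild subtlety is that $[n,\der]$ need not lie in $\mathcal{N}_{\mathcal{W}}$; this is harmless, since its $\mfd$-component is absorbed by (1) and its $\mathcal{N}_{\mathcal{W}}$-component by the inductive hypothesis.
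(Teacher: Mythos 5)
Your proof is correct, but note that the comparison here is not with an argument in the paper: the paper states this lemma as a citation of \cite[Lemma 6.3]{BDK2} and gives no proof of its own (even reproducing a typo in item (3), which you correctly repaired to $\mathcal{W}_1 \cdot F^p T(V) \subset F^{p-1} T(V)$ --- the only reading under which the statement is meaningful and the one in the source). Your reconstruction is sound and self-contained: the PBW identification $U(\mathcal{W}^e) \cong U(\mfd) \otimes U(\mathcal{N}_{\mathcal{W}})$ coming from the vector-space splitting $\mathcal{W}^e = \mfd \oplus \mathcal{N}_{\mathcal{W}}$ gives $T(V) \cong H \otimes V$ with $\mfd$ acting by left multiplication, which yields (1) immediately; the nested inductions for (2) and (3) via the commutator identity are then watertight, and you correctly isolated the two structural inputs that make them go through, namely $\der(\mathcal{W}_1) \subseteq \mathcal{W}_0 \subseteq \mathcal{N}_{\mathcal{W}}$ (the first inclusion from \eqref{l-shift} with $l=0$, the second because $[\mathcal{W}_0,\mathcal{W}_p]\subseteq \mathcal{W}_p$) and $\mathcal{W}_1 \cdot V = 0$ by construction of the induced module. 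You also handled the one genuine subtlety --- that $[n,\der]$ for $n \in \mathcal{N}_{\mathcal{W}}$ need not lie in $\mathcal{N}_{\mathcal{W}}$ --- by splitting it along $\mfd \oplus \mathcal{N}_{\mathcal{W}}$ and absorbing the two components into (1) and the inductive hypothesis respectively; omitting this check would have been the natural gap in a sketch of this kind, and your argument closes it. In substance this is the same filtration bookkeeping by which the result is established in \cite{BDK2}, so what your write-up buys, relative to the paper under review, is simply that the lemma becomes self-contained rather than imported.
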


For $S(\mfd)$ we do the same construction:\\
take a finite-dimensional $\mfd \oplus \sld$-module $V$, let $\mathcal{S}_1$ act trivially on it so that it has an action of $\mathcal{N}_\mathcal{S}$; then consider the $\mathcal{S}^e$-module $T_{S}(V)=Ind_{\mathcal{N}_{\mathcal{S}}}^{\mathcal{S}^e} V =\mathcal{S}^e\otimes_{\mathcal{N}_{\mathcal{S}}} V$ which again can be identified, as an $H$-module, with $H\otimes V$.\\
In \cite[Theorem 7.3]{BDK2} it is proven that these modules can be obtained as the restriction of tensor modules for $W(\mfd)$, therefore we will call them again \emph{tensor modules} for $S(\mfd)$.\\
If $V$ is of the form $\Pi\boxtimes U$, we will also indicate $T_{S}(V)=T_S(\Pi,U)$.\\
We can define the same filtration we defined in the $W(\mfd)$ case and have the same nice behavior relatively to the filtration of $\mathcal{S}$.\\

It makes sense now to introduce the notion of \emph{singular vectors} for $W(\mfd)$ and $S(\mfd)$.
 
\begin{defin}
	For a $W(\mfd)$-module $V$, a \emph{singular vector} is an element $v\in V$ such that $\mathcal{W}_1 \cdot v =0$. The space of singular vectors in $V$ in indicated by $sing \, V$.\\
	Analogously one defines singular vectors for $S(\mfd)$-modules.
\end{defin}

\begin{rem}
	By Remark \ref{remark verma modules as S(d) modules} a Verma module has a structure of $S(\mfd)$-module. Again we stretch the fact that all the identifications preserve the filtrations. Therefore, by the last definition, a singular vector for $E(5,10)$ in a Verma module is also  singular for $S(\mfd)$.
\end{rem}

Focusing only on type $S$ now, we summerize all the main results about tensor modules for $S(\mfd)$ and singular vectors in \cite[Section 7]{BDK2}.\\
We will indicate by $\Omega^n$ the $\sld$-module $\bigwedge \mfd^\ast$ with the natural action, where $\Omega^0$ is the trivial module $\C$.
\begin{theo}\label{theorem degrees tensor modules}$ $\newline
	\begin{itemize}
		\item Every irreducible finite $S(\mfd)$-module is a quotient of a tensor module;\\
		\item  Let $\Pi$ (resp. $U$) be an irreducible finite-dimensional module over $\mfd$ (resp. $\sld$). Then the $S(\mfd)$-module $T_S(\Pi,U)$ is irreducible if and only if $U$ is not isomorphic to $\Omega^n$ for any $n\geq 0$;\\
		\item If $V=T_S(\Pi,\Omega^n)$, $n\neq 1$, then $sing \, V \subset F^1 V$;\\
		\item If $V=T_S(\Pi,\Omega^1)$, then $F^1 V \subsetneq sing \, V \subset F^2 V$.\\
	\end{itemize}
\end{theo}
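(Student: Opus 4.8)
The plan is to move everything to the annihilation algebra. Using the isomorphism $\phi$ of \eqref{isoS(n)} and the correspondence of Proposition \ref{correspondance}, an $S(\mfd)$-module is the same datum as a conformal $\mathcal{S}^e$-module, and under this dictionary $sing\,V=\ker_1 V$. Since $\mathcal{S}_1$ annihilates $\ker_1 V$ and the normalizer $\mathcal{N}_{\mathcal{S}}$ preserves it, $sing\,V$ is naturally a module over $\mathcal{N}_{\mathcal{S}}/\mathcal{S}_1\cong\mfd\oplus\sld$. This $\mfd\oplus\sld$-structure on singular vectors, together with the filtered module $T_S(V)=H\otimes V$, is what I would exploit in all four statements.

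For the first bullet, let $M$ be an irreducible finite $S(\mfd)$-module, regarded as a conformal $\mathcal{S}^e$-module. The essential input is that $sing\,M=\ker_1 M$ is nonzero; this is the standard existence-of-singular-vectors statement for finite conformal modules, resting on the transitivity of the grading of $\mathcal{S}$ and the finiteness of $M$ over $H$. Granting it, $sing\,M$ is a nonzero finite-dimensional $\mfd\oplus\sld$-module, and I choose an irreducible submodule $V\subseteq sing\,M$. Since $\mathcal{S}_1$ kills $V$, the inclusion $V\hookrightarrow M$ is $\mathcal{N}_{\mathcal{S}}$-equivariant, so by the universal property of the induced module it extends to a nonzero $\mathcal{S}^e$-homomorphism $T_S(V)=\mathcal{S}^e\otimes_{\mathcal{N}_{\mathcal{S}}}V\to M$; irreducibility of $M$ makes it surjective, exhibiting $M$ as a quotient of a tensor module.

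The remaining bullets form the computational core, and I would handle them uniformly by a leading-symbol analysis. Pass to the associated graded of the filtered module, $\operatorname{gr}T_S(\Pi,U)\cong\operatorname{Sym}(\mfd)\otimes V$ with $V=\Pi\boxtimes U$, and record that $\mathcal{S}_1$ lowers the filtration degree by one, so its class in $\mathcal{S}_1/\mathcal{S}_2$ induces a first-order symbol operator on $\operatorname{gr}T_S(V)$. Unwinding the bracket through the generators $s_{ab}$ of \eqref{s_ab}, this symbol is a contraction of the polynomial part against $\mfd^\ast$ twisted by the $\sld$-action on $U$. A homogeneous singular vector of top filtration degree $p$ has a nonzero leading symbol in $\operatorname{Sym}^p(\mfd)\otimes V$ lying in the kernel of this operator; solving the resulting finite-dimensional $\sld$-equation shows its kernel in positive degree is controlled by the formal de Rham differentials $\Omega^n\to\Omega^{n+1}$, hence is nonzero precisely when $U\cong\Omega^n$. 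This yields at once the irreducibility criterion of the second bullet — a positive-degree singular vector generates a proper submodule, and conversely its absence forces irreducibility — and, for $n\neq1$, bounds the top degree by $p\leq1$, i.e.\ $sing\,V\subseteq F^1 V$, as in the third bullet.

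The fourth bullet, and the main obstacle, is the exceptional behaviour at $n=1$, i.e.\ $U=\Omega^1\cong\mfd^\ast$. The divergence-free constraint cutting $S(\mfd)$ out of $W(\mfd)$ — precisely the feature absent in type $W$, where the bound is always $1$ — annihilates the trace contraction $\mfd\otimes\mfd^\ast\to\C$ that constitutes the degree-one symbol in this weight; consequently all of $F^1 V$ is singular, and the symbol equation becomes solvable one degree higher, producing a genuine degree-$2$ singular vector, so that $F^1 V\subsetneq sing\,V$. I would exhibit this degree-$2$ vector explicitly and then close the estimate by checking that the symbol equation has no degree-$3$ solution — once more a finite $\sld$-computation, using the vanishing of the relevant de Rham cohomology — which gives $sing\,V\subseteq F^2 V$. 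The delicate points I expect are the exact identification of the $\mathcal{S}_1$-symbol through the $s_{ab}$ and the verification that $n=1$ is the unique weight at which the trace/divergence condition raises the degree from $1$ to $2$ and no further.
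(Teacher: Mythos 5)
Note first what you are comparing against: the paper does not prove this theorem at all. It is stated verbatim as a summary of \cite[Section 7]{BDK2} (``we summerize all the main results about tensor modules for $S(\mfd)$ and singular vectors in \cite[Section 7]{BDK2}''), so the paper's ``proof'' is a citation, and the relevant benchmark is the proof in \cite{BDK2}. Your skeleton does reproduce that proof's architecture faithfully: the first bullet is argued there exactly as you propose (nonvanishing of $\ker_1 M$ for a finite conformal module, an irreducible $\mfd\oplus\sld$-submodule of $sing\,M$, and the universal property of induction from $\mathcal{N}_{\mathcal{S}}$), and the remaining bullets are indeed settled in \cite{BDK2} by analyzing the action of $\mathcal{S}_1/\mathcal{S}_2$ on filtered pieces of $T_S(V)=H\otimes V$ together with the pseudo de~Rham complex.

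As a standalone proof, however, the proposal has concrete gaps, all concentrated where the real work lies. (i) The computational core --- identifying the symbol of $\mathcal{S}_1/\mathcal{S}_2$ through the generators $s_{ab}$ and determining its kernel weight by weight --- is compressed into the assertion that the kernel ``is controlled by the formal de Rham differentials, hence is nonzero precisely when $U\cong\Omega^n$''; this is the content of two long sections of \cite{BDK2}, and it is complicated by the fact that $T_S(\Pi,U)$ is a \emph{restriction} of a type-$W$ tensor module with a twisted $\gld$-weight (\cite[Theorem 7.3]{BDK2}), so the naive symbol acquires a correction term that your sketch does not account for --- and that correction is exactly what singles out $\Omega^1$. (ii) In degrees $\geq 2$ the vanishing of the symbol is only a \emph{necessary} condition on a leading term: for the ``if'' direction of the second bullet and for $F^1V\subsetneq sing\,V$ in the fourth, you must solve the full (unfiltered) equation and, moreover, show the submodule generated is proper and nonzero; in \cite{BDK2} this is done by exhibiting the pseudo de~Rham morphisms between tensor modules, including the degree-$2$ morphism into $T_S(\Pi,\Omega^1)$ arising from the identification $\Omega^{\dim\mfd}\cong\Omega^0$. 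You acknowledge exhibiting the degree-$2$ vector, but properness of the resulting submodules is never addressed. (iii) To conclude $sing\,V\subset F^2V$ it does not suffice to rule out degree-$3$ symbol solutions: a singular vector of top filtration degree $p$ has leading symbol in degree $p$, so the kernel must be shown to vanish for \emph{every} $p\geq 3$ (and for every $p\geq 2$ when $n\neq 1$), not just $p=3$. None of these steps is wrongly conceived --- the route is the right one --- but each is asserted rather than proved, and together they constitute essentially the whole theorem.
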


Combining the results above, one gets the following picture:\\
if one looks for irreducible $S(\mfd)$-modules (or, by mean of Proposition \ref{correspondance}, irreducible $\mathcal{S}^e$-modules ) one studies tensor modules. For a tensor module, to be irreducible is equivalent to not contain positive degree singular vectors. Singular vectors can only be found when one induces from irreducible $\sld$-modules $\Omega^n$ and they are found in degree $1$. except for the case $n=1$ where one finds also singular vectors of degree 2.\\

\section{Bound on degree of singular vectors}\label{sectionBoundSingularvectors}
Direct computation of singular vectors is not immediate, especially in high degrees. One thing one can do is trying, as a start, to rule out as many options as possible. This can be done, for example, by looking for restricting conditions on the degree of singular vectors, which is what we are about to do.\\
Recall that by Theorem \ref{theorem degrees tensor modules}, we have a very important result that points in this direction in the "pseudo" setting.\\
Turns out, in order to get a first bound on the degree of singular vectors, it is enough to take into account just the even structure of $\mfL$, exploiting the pseudoalgebra techniques available.\\
To do so, we first define some subspaces of $\mfT(V)$.\\

For $i=0,\dotsc, 10$, let
\begin{equation}
\Gamma_i(V):=\Big\{ v=\sum_{I,K}\der^{(I)}\xi^K \otimes v_{IK}\in \mfT(V) \, | \, |K|\leq i \text{ se } v_{IK}\neq 0 \Big\}.
\end{equation}
In other words, $\Gamma_i(V)$ consists of vectors with "odd degree" at most $i$. We may also describe $\Gamma_i(V)$ as the space generated by the PBW monomials of degree $(n\, |\,j)$ with $j\leq i$.\\
It is easy to check the following properties:
\begin{itemize}
 	\item $\Gamma_i(V)\subseteq \Gamma_{i+1}(V)$;\\
 	\item $\Gamma_0(V)=U(\mfd)\otimes V$;\\
 	\item  $\Gamma_{10}(V)=\mfT(V)$;\\
 	\item  $\Gamma_i(V)$ is a $\mfL_{(0)}\cong S(5)$-submodule of $\mfT(V)$.
 \end{itemize}
Basically, we have built a finite  filtration of $S(5)$-modules of $\mfT(V)$.\\
The last property follows from the fact that the action of an even element in $\mfL$ can only lower the odd degree in $\mfT(V)$. Take for example $y\in \mfL_2, \, v=\der^{(I)}\xi^{K}\otimes v_I^K\in \mfT(V)$.\\
We have
\[
y\cdot v=y\cdot (\der^{(I)}\xi^K\otimes v_{I}^{K})=[y , \der^{(I)}] \xi^K \otimes v_{I}^{K}+ \der^{(I)} [y, \xi^K] \otimes v_{I}^{K}+\der^{(I)}\xi^K \otimes  y\cdot v_{I}^{K} .
\]
Here the third term is $0$ because $\mfL_+$ acts trivially on $V$; the first term consists of elements of degree $(\, |I|-1 \; | \; |K|\, )$; lastly, the second term, once expanded the bracket and sorted everything, can only contribute with elements of degree $(|I| \; | \; |K|-2 )$ or $(|I|+1 \; | \; |K| -4)$. In any case the odd degree cannot increase.\\

These properties allow us to talk about quotients.\\
Let us consider the quotients of $S(5)$-modules $\Gamma_{i}(V)/\Gamma_{i-1}(V)$ for $i=0,\dotsc, 10$ (where we impose $\Gamma_{-1}(V)=0$).\\
As $\sld$-modules, they are isomorphic to $U(\mfd)\otimes (\bigwedge^{i}(\mfs)\otimes V)$ (follows from \cite[Corollary 6.4.5]{M}). The latter look a lot like tensor modules $T(\bigwedge^{i}(\mfs)\otimes V)$ for $S(\mfd)$: if the action of $S(5)$ can be interpreted as the action of the annihilator algebra associated to pseudoaction of $S(\mfd)$, we can put to use Proposition \ref{(correspondance)}.
This is, in fact, possible in the following way.\\

The action of $y\in \mfL_{2j}$ on a class $\overline{u \cdot \xi^K \otimes v}\in \Gamma_{i}(V)/\Gamma_{i-1}(V)$, where $u\in U(\mfd)$, $|K|=i$ and $v\in V$, behaves, depending on $j$, like:\\
\begin{description}
	\item[for $\mathbf{j=-1}$]$y\cdot \overline{u\xi^K \otimes v} =\overline{{(yu)\xi^K \otimes v}}$, since in this case $y\in \mfd\subseteq U(\mfd)$;\\
	\item[ for  \, $\mathbf{j=0}$]$y\cdot \overline{u\xi^K \otimes v} = \overline{[y,u]\xi^K\otimes v + u [y,\xi^K]\otimes v + u \xi^K \otimes (y\cdot v)}$;\\
	\item[ for  \, $\mathbf{j>0}$]$\begin{aligned}[t] y\cdot \overline{u\xi^K \otimes v} =& \overline{[y,u]\xi^K\otimes v + u [y,\xi^K]\otimes v + u \xi^K \otimes (y\cdot v)}\\
																		=&\overline{[y,u]\xi^K\otimes v}
						  \end{aligned}$\\
\end{description}
where the last equality is due to the fact that $y$ lowers the odd degree of at least $1$, sending the second term to $0$ in the quotient, and acts trivially on $V$.\\
Notice that in the case $j=0$, the action of $y\in\mfL_0\cong \sld$ on $\xi^K$ is actually the same as the one on the $\sld$-module $\bigwedge^{i}(\mfs)$ (since the other terms that usually appear in the bracket $[y,\xi^K]$ once sorted are $0$ in the quotient).\\

Summing up, we have that $\mfL_{(0)}=S(5)$ acts on $\Gamma_{i}(V)/\Gamma_{i-1}(V)$ 
\begin{itemize}
	\item by left multiplication on the $H=U(\mfd)$ factor with the negative degree part;
	\item by the natural action of $\sld$ on $U(\mfd)\otimes(\bigwedge^i(\mfs)\otimes V)$ with the degree $0$ part;
	\item trivially on $\bigwedge^{i}(\mfs)\otimes V$ with the positive degree part.
\end{itemize}
Since this is exactly the action of $\mathcal{S}$ on the tensor module $T_S(\bigwedge^i(\mfs)\otimes V)$ via the isomorphism in \eqref{isoS(n)}, we can state:
\begin{theo}\label{theorem Gamma tensor}
	Let $V$ be a finite-dimensional irreducible $\sld$-module. Then we have an isomorphism of $S(5)\cong X\otimes_H S(\mfd)$-modules
\begin{align}
\varphi:\Gamma_{i}(V)/\Gamma_{i-1}(V) &\xrightarrow{\sim} T_S(\textstyle\bigwedge^{i} (\mfs)\otimes V)
\end{align}
\end{theo}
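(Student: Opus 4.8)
The plan is to realize the claimed map as the inverse of an \emph{induced}-module map, so that $\mathcal{S}$-equivariance has to be checked only on a generating subspace, where the bracket computations performed just before the statement already supply everything.

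First I would fix the underlying linear isomorphism. By the (super) PBW theorem \cite[Corollary 6.4.5]{M}, the monomials $\der^{(I)}\xi^K$ with $|K|=i$ descend to a basis of $\Gamma_i(V)/\Gamma_{i-1}(V)$, so sending $\overline{\der^{(I)}\xi^K\otimes v}$ to $\der^{(I)}\otimes(\xi^K\otimes v)$ defines a linear bijection onto $H\otimes(\bigwedge^i(\mfs)\otimes V)$, the underlying $H$-module of the tensor module $T_S(\bigwedge^i(\mfs)\otimes V)$. Both sides are \emph{free} $H=U(\mfd)$-modules. By Remark \ref{remark verma modules as S(d) modules} the $H$-action on $\Gamma_i(V)/\Gamma_{i-1}(V)$ (that is, the action of the external $\mfd\subset\mathcal{S}^e$) is left multiplication on the $U(\mfd)$-factor, which matches the $H$-action on $T_S(\bigwedge^i(\mfs)\otimes V)=H\otimes(\bigwedge^i(\mfs)\otimes V)$; hence the bijection is an isomorphism of $H$-modules.

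Next I would use the universal property of induction. Since $T_S(W)=\mathcal{S}^e\otimes_{\mathcal{N}_{\mathcal{S}}}W$ with $W=\bigwedge^i(\mfs)\otimes V$, an $\mathcal{S}^e$-module map $T_S(W)\to\Gamma_i(V)/\Gamma_{i-1}(V)$ is the same datum as an $\mathcal{N}_{\mathcal{S}}$-module map $W\to\Gamma_i(V)/\Gamma_{i-1}(V)$. I would take $\xi^K\otimes v\mapsto\overline{1\cdot\xi^K\otimes v}$, landing in the $u=1$ subspace, and verify $\mathcal{N}_{\mathcal{S}}$-equivariance along the subquotients of $\mathcal{N}_{\mathcal{S}}/\mathcal{S}_1\cong\mfd\oplus\sld$: the part $\mathcal{S}_1$ kills the image, since the computation for $y\in\mfL_{2j}$ with $j>0$ and $u=1$ gives $\overline{[y,1]\xi^K\otimes v}=0$; the copy of $\sld$ acts by the natural action on $\bigwedge^i(\mfs)\otimes V$, since the $j=0$ computation with $u=1$ reduces to $\overline{[y,\xi^K]\otimes v+\xi^K\otimes yv}$; and the copy of $\mfd\subset\mathcal{N}_{\mathcal{S}}$ acts trivially, matching the trivial $\mfd$-structure of the inducing module (this is the delicate point, see below). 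Extending by the universal property yields an $\mathcal{S}^e$-map $\tilde\varphi\colon T_S(W)\to\Gamma_i(V)/\Gamma_{i-1}(V)$ restricting to the identity on $W$; being a map of free $H$-modules that is bijective on generators, it is an isomorphism, and restricting it to $\mathcal{S}=S(5)\subset\mathcal{S}^e$ (cf.\ Proposition \ref{correspondance}) gives the desired $\varphi=\tilde\varphi^{-1}$.

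The main obstacle is disentangling the two copies of $\mfd$ inside $\mathcal{S}^e=\mfd\oplus\mathcal{N}_{\mathcal{S}}$: the \emph{external} $\mfd$, which generates $T_S(W)$ freely and acts by left multiplication on $H$, and the copy of $\mfd$ sitting inside $\mathcal{N}_{\mathcal{S}}$ (the image of the constant vector fields $\mfL_{-2}$ modulo the external $\mfd$), which must act trivially on the top space $W$ in order to produce the untwisted module $T_S(\bigwedge^i(\mfs)\otimes V)$ rather than some $T_S(\Pi,\cdot)$ with nontrivial $\Pi$. In the present realization both the external $\mfd$ and the internal constant vector fields act on $U(\mfd)$ by the \emph{same} left multiplication (Remark \ref{remark verma modules as S(d) modules} and the case $j=-1$), so their difference, which spans $\mfd\subset\mathcal{N}_{\mathcal{S}}$, acts as zero, exactly as required. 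Making this rigorous amounts to tracking the filtration-preserving isomorphism $\phi$ of \eqref{isoS(n)} and the identifications of Remark \ref{remark verma modules as S(d) modules} through Proposition \ref{correspondance}; once the two copies of $\mfd$ are correctly matched, what remains is precisely the bracket bookkeeping already carried out before the statement.
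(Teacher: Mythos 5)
Your proposal is correct, and its computational engine is the same as the paper's: the case-by-case action of $y\in\mfL_{2j}$ for $j=-1$, $j=0$ and $j>0$ on a class $\overline{u\,\xi^K\otimes v}$, which the paper carries out immediately before the statement. Where you genuinely differ is the packaging. The paper takes the PBW linear bijection $\Gamma_i(V)/\Gamma_{i-1}(V)\cong U(\mfd)\otimes(\bigwedge^i(\mfs)\otimes V)$ and concludes by direct comparison: the negative part of $S(5)$ acts by left multiplication on the $U(\mfd)$ factor, the degree-zero part by the natural $\sld$-action, the positive part trivially on the top, and this is observed to be \emph{literally} the known action of $\mathcal{S}$ on the tensor module under the filtration-preserving isomorphism \eqref{isoS(n)}; no induction functor or universal property is invoked. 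You instead build the inverse map by Frobenius reciprocity from $T_S(W)=\mathcal{S}^e\otimes_{\mathcal{N}_{\mathcal{S}}}W$, which buys two things: equivariance needs checking only on the top space $W$ (bijectivity then follows from $H$-freeness and matching on generators), and --- more substantially --- it forces the question of how the copy of $\mfd$ inside $\mathcal{N}_{\mathcal{S}}$ acts on $W$, a point the paper passes over in silence since it never pins down the $\Pi$-component of the inducing $\mfd\oplus\sld$-module. Your resolution (external $\mfd$ and the constant fields $\mfL_{-2}$ both act by left multiplication, so the diagonal elements spanning $\mfd\subset\mathcal{N}_{\mathcal{S}}$ act by zero) is the right mechanism, modulo the sign and lower-order-tail bookkeeping through $\phi$ that you explicitly defer; note the paper is no more precise on this, and for the applications it is harmless, since the degree bounds of Theorem \ref{theorem degrees tensor modules} hold for every $\Pi$. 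Two small points would make your version airtight: your verification that $\mathcal{S}_1$ kills the image of $W$ is done on the graded pieces $\mfL_{2j}$, $j>0$, which suffices only because the target is a conformal module, so the action of the completed algebra $\mathcal{S}_1$ is determined by continuity from these pieces; and the universal property presupposes that $\Gamma_i(V)/\Gamma_{i-1}(V)$ is an $\mathcal{S}^e$-module, which deserves one explicit line, namely that the $\Gamma_j(V)$ are stable both under $S(5)$ and under left multiplication by $\mfd$, so Remark \ref{remark verma modules as S(d) modules} descends to the quotient.
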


Take now $v\in sing \, \mfT^p(V)$. It is, in particular, a singular vector for $\mfL_{(0)}=S(5)$ of degree $p$. We indicate the space of such vectors with $sing_{S(5)}\, \mfT(V)$.\\
If we consider $\overline{v}\in\Gamma_{i}(V)/\Gamma_{i-1}(V)$ for a suitable $i=0, \dotsc, 10$, it will still be a singular vector in what we now know is a tensor module. Therefore, by Theorem \ref{theorem degrees tensor modules}, the even degree of $\overline{v}$ must be $\leq 2$. Since the odd degree of a vector cannot be larger than $10$, these ideas, formalized, prevent vectors with a sufficiently high enough degree from being singular.
\begin{theo}\label{theo14}
	Let $\mfT(V)$ be a minimal Verma module and let $v\in sing \, \mfT(V)$.  Then $v$ has degree at most $14$.
\end{theo}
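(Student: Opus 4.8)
The plan is to read a homogeneous singular vector through the odd-degree filtration $\Gamma_\bullet(V)$ and transport the question to the tensor modules, where the degree is already controlled. Fix $0\neq v\in sing\,\mfT^p(V)$; by the remark that homogeneous (and weight) components of a singular vector are again singular, I may assume $v$ is homogeneous of total degree $p$. First I would record that $v$ is automatically singular for the even part: under the identification $\mfL_{(0)}=S(5)=\mathcal{S}$ one has $\mathcal{S}_1\subseteq\mfL_+$, and since $\mfL_+ v=0$ we get $\mathcal{S}_1 v=0$, i.e.\ $v\in sing_{S(5)}\,\mfT(V)$.

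Next, let $i\leq 10$ be the largest odd degree occurring among the bidegrees $(n'|m')$ of the monomials of $v$; by construction $v\in\Gamma_i(V)\setminus\Gamma_{i-1}(V)$, so its class $\overline v$ in $\Gamma_i(V)/\Gamma_{i-1}(V)$ is nonzero. The key observation is that $\overline v$ is \emph{pure} in the even degree: among the terms of $v$ with $2n'+m'=p$, only those with $m'=i$ survive in the quotient, and these all share the even degree $n=(p-i)/2$. Moreover $\overline v$ is still annihilated by $\mathcal{S}_1$, because each $\Gamma_j(V)$ is an $S(5)$-submodule, so the $\mathcal{S}_1$-action descends; thus $\overline v$ is a nonzero singular vector. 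By Theorem \ref{theorem Gamma tensor} the quotient is the tensor module $T_S(\bigwedge^i(\mfs)\otimes V)$, so $\overline v$ is a singular vector inside a tensor module.

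The heart of the argument is then to bound the even degree $n$ via Theorem \ref{theorem degrees tensor modules}. Since the coefficient module $\bigwedge^i(\mfs)\otimes V$ need not be $\sld$-irreducible, I would decompose it as $\bigoplus_k U_k$ into irreducibles; as induction is additive, $T_S(\bigwedge^i(\mfs)\otimes V)=\bigoplus_k T_S(\Pi,U_k)$ and the component of $\overline v$ in each summand is again singular. For each irreducible constituent $U_k$ one gets $sing\,T_S(\Pi,U_k)\subseteq F^2$: if $U_k\cong\Omega^1$ this is the stated bound; if $U_k\cong\Omega^n$ with $n\neq1$ the singular vectors lie in $F^1\subseteq F^2$; and if $U_k$ is not isomorphic to any $\Omega^n$ the tensor module is irreducible, hence (by the analogue of Proposition \ref{sing-degenerate}) has no positive-degree singular vectors, so $sing\,T_S(\Pi,U_k)\subseteq F^0\subseteq F^2$. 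Therefore $\overline v\in F^2$, which under the isomorphism $\varphi$ means precisely $n\leq 2$. Combining, $(p-i)/2=n\leq 2$, so $p\leq i+4\leq 10+4=14$.

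I expect the main obstacle to lie in the last paragraph: one must ensure the bound $sing\subseteq F^2$ is genuinely \emph{uniform} across all constituents $U_k$ (in particular confirming that the irreducible case forces singular vectors into the bottom degree), and one must keep the degree conventions aligned, matching the even degree $|I|$ in $E(5,10)$ with the $F^p$-filtration index of the tensor module under $\varphi$. The rest is bookkeeping: verifying that passing to the quotient $\Gamma_i/\Gamma_{i-1}$ preserves both nonvanishing and singularity, and that the surviving terms genuinely have constant even degree $(p-i)/2$.
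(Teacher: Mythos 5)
Your argument is essentially the paper's proof: the same filtration $\Gamma_\bullet(V)$ by odd degree, the same passage to the nonzero class $\overline{v}$ of pure even degree $(p-i)/2$ in the quotient, the identification of that quotient as a tensor module via Theorem \ref{theorem Gamma tensor}, the bound $\leq 2$ on the even degree from Theorem \ref{theorem degrees tensor modules}, and the arithmetic $p\leq i+4\leq 14$. In one respect you are more careful than the paper: you observe that $\bigwedge^{i}(\mfs)\otimes V$ is in general not $\sld$-irreducible and decompose $T_S(\bigwedge^i(\mfs)\otimes V)=\bigoplus_k T_S(\Pi,U_k)$, checking the bound constituent by constituent, whereas the paper applies Theorem \ref{theorem degrees tensor modules} to the whole tensor module at once.

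The one weak link is your treatment of the constituents $U_k$ not isomorphic to any $\Omega^n$: you claim the irreducible tensor module $T_S(\Pi,U_k)$ has no positive-degree singular vectors ``by the analogue of Proposition \ref{sing-degenerate},'' but that analogy does not transfer. The proof of Proposition \ref{sing-degenerate} uses that $\mfL_0$ preserves and $\mfL_-$ raises a \emph{grading}, so the submodule generated by a singular vector of degree $p$ lies in degrees $\geq p$ and is therefore proper. For a tensor module the relevant decomposition is $\mathcal{S}^e=\mfd\oplus\mathcal{N}_{\mathcal{S}}$, and $\mathcal{N}_{\mathcal{S}}$ preserves only the filtration $F^\bullet T_S(V)$, not a grading; consequently $U(\mathcal{S}^e)v=U(\mfd)\,U(\mathcal{N}_{\mathcal{S}})\,v\subseteq H\cdot F^p T_S(V)=T_S(V)$, and the submodule generated by a positive-degree singular vector is not visibly proper, so irreducibility alone does not force $sing \subseteq F^0$ by this route. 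The fact you need is nevertheless true: it is part of the explicit classification of singular vectors in \cite[Section 7]{BDK2}, which gives the uniform bound $sing\, T_S(V)\subseteq F^2 T_S(V)$ --- this is exactly how the paper reads Theorem \ref{theorem degrees tensor modules} when it applies it to the (reducible) module $T_S(\bigwedge^{i}(\mfs)\otimes V)$. Replace the appeal to the analogue of Proposition \ref{sing-degenerate} with that citation and your proof is complete.
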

\begin{proof}
We can assume that $v$ is homogeneous of degree $p$.\\
We have, if $p$ is either even or odd:\\

\begin{description}
	\item[p=2n]   $\mfT^p(V)=S^n(\mfd)\otimes V +S^{n -1}(\mfd) \, \bigwedge^2(\mfs)\otimes V+\dotsb + S^{n-5}(\mfd) \bigwedge^{10}(\mfs)\otimes V$;
	\item[p=2n+1]   $\mfT^p(V)=S^{n}(\mfd) \, \mfs \otimes V + S^{n-1}(\mfd)\bigwedge^3(\mfs)\otimes V + \dotsb + S^{n-4}(\mfd) \bigwedge^9(\mfs)\otimes V$.\\
\end{description}

We study the case $p=2n$.\\
Let $0\leq m_0 \leq 10$ be the greatest index such that the term of $v$ in degree $(\frac{p-m_0}{2} | m_0)$ is not $0$ (i.e. the term of $v$ in $S^{(p-m_0)/2}(\mfd)\bigwedge^{m_0}(\mfs)\otimes V$).\\
Therefore $v\in\Gamma_{m_{0}}(V)$ and it is a combination of terms in degrees \[( \textstyle\frac{p-m_0}{2}  |  m_0), \, ( \textstyle\frac{p-m_0}{2} +1  | m_0 -2  ), \dotsc , \, ( \textstyle\frac{p}{2}  |  0  ).\]
We can then consider 
\[
\overline{v} \in \Gamma_{m_{0}}(V)/\Gamma_{m_{0}-1}(V)\cong U(\mfd)\otimes(\textstyle\bigwedge^{m_0}(\mfs)\otimes V).
\]
Notice that given $y\in\mfL_{2}$, $y\cdot v\in \mfT^{p-1}(V)$ and the term of degree $(\frac{p-m_0-1}{2} | m_0)$ can be obtained only acting with $y$ on the term of $v$ of degree $(\frac{p-m_0}{2} | m_0)$. Hence, if $v$ is singular for $S(5)$, so must be $\overline{v}$.\\
Recapitulating: if $v\in sing \,\mfT^p(V)$, $\overline{ v }\in U(\mfd)\otimes(\bigwedge^{m_0}(\mfs)\otimes V)=T(\bigwedge^{m_0}(\mfs)\otimes V)$ is a singular vector for $S(5)$ of (even) degree $(p-m_0)/2$ in a tensor module. By Theorem \ref{theorem degrees tensor modules}, the even degree of $\overline{ v }$ must be less than or equal to $2$, which means that $(p-m_0)/2\leq 2$, that is $p\leq m_0 + 4\leq 14$. When $p$ is odd the same argument holds.
\end{proof}

The proof actually tells us more than the statement of the theorem: we can not only estimate the singular vectors' degree, but we can also rule out straightforwardly most of the irreducible $\sld$-modules whose induced modules we expect to possibly contain singular vectors of a certain degree.\\
We recall that in our notation $\Omega^1=\mfd^\ast\cong V([1,0,0,0])$. Similarly $\Omega^2\cong V([0,1,0,0])$, $\Omega^3\cong V([0,0,1,0])$ and $\Omega^4\cong V([0,0,0,1])$.\\

Now apply, for example, the proof's arguments on degree 14:\\
let $v$ be a singular vector of degree $14$ in a minimal Verma module $\mfT(V)$ and consider $\overline{v}\in \Gamma_{10}(V)/\Gamma_{9}(V)\cong U(\mfd)\otimes(\bigwedge^{10}(\mfs)\otimes V)\cong U(\mfd)\otimes V$. If $\overline{ v }\neq 0$, this means that the term of $v$ of degree $(2 | 10)$ is not $0$.\\
We know by Theorem \ref{theorem degrees tensor modules} that we can find singular vectors of (even) degree $2$ in a tensor module $T(V)$ where $V$ is irreducible if and only if $V\cong \Omega^1\cong V([1,0,0,0])$.\\
Therefore, if we assume that $V\ncong \mfd^{\ast}$, $\overline{ v }$ will necessarily be $0$. This implies that $v\in \Gamma_{8}(V)$ and we can consider $\overline{ v }\in U(\mfd)\otimes(\bigwedge^8(\mfs)\otimes V)$, thus obtaining a singular vector in $T(\bigwedge^8(\mfs)\otimes V)$ of (even) degree $(14-8)/2=3$. This cannot happen, so that the only possible solution is $\overline{ v }=0$. Iterating, we discover that $v$ must be $0$.\\
We proved:
\begin{lem}
	If $V\ncong \mfd^\ast\cong V([1,0,0,0])$, $sing \, \mfT^{14}(V)=\lbrace 0 \rbrace$.
\end{lem}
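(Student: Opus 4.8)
The plan is to run the filtration descent sketched just above, but organized so that the hypothesis $V\ncong\mfd^\ast$ enters in exactly one case. First I would reduce to $v\in sing\,\mfT^{14}(V)$ homogeneous and a weight vector. Since a PBW monomial $\der^{(I)}\xi^K$ has total degree $2|I|+|K|$, the odd degree $|K|$ of every component of $v$ has the same parity as $14$, hence is even; thus all components of $v$ lie in bidegrees $(\tfrac{14-m}{2}\mid m)$ with $m\in\{0,2,4,6,8,10\}$.

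Assume $v\neq 0$ and let $m_0$ be the largest such $m$ for which the corresponding component of $v$ is nonzero, so that $v\in\Gamma_{m_0}(V)\setminus\Gamma_{m_0-1}(V)$. Passing to the quotient and using Theorem \ref{theorem Gamma tensor}, I obtain a nonzero class $\overline{v}\in \Gamma_{m_0}(V)/\Gamma_{m_0-1}(V)\cong T_S(\bigwedge^{m_0}(\mfs)\otimes V)$. The bidegree bookkeeping recorded before Theorem \ref{theorem Gamma tensor} shows that $\overline{v}$ is again singular for $S(5)$ and that, viewed inside the tensor module, it is homogeneous of filtration degree $(14-m_0)/2$: concretely, for $y\in\mfL_2$ the top-odd-degree part of $y\cdot v$ is produced only from the top component of $v$, and this part is precisely the representative of $\mfL_2\cdot\overline v$, which vanishes because $\mfL_2 v=0$.

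Now I would invoke Theorem \ref{theorem degrees tensor modules}, applied to each irreducible $\sld$-summand of the factor $\bigwedge^{m_0}(\mfs)\otimes V$. For each summand the singular vectors lie in $F^2$ (indeed in $F^1$ unless the summand is $\Omega^1$), so $sing$ of the whole tensor module lies in $F^2$; in particular no nonzero singular vector of filtration degree $\geq 3$ can exist. When $m_0\in\{0,2,4,6,8\}$ the degree $(14-m_0)/2\in\{7,6,5,4,3\}$ exceeds $2$, contradicting $\overline v\neq 0$. The only surviving possibility is $m_0=10$, where the degree is exactly $2$. Here the $\sld$-factor is $\bigwedge^{10}(\mfs)\otimes V$; since $\mfs$ is $10$-dimensional, $\bigwedge^{10}(\mfs)$ is one-dimensional and trivial as an $\sld$-module, so the factor is isomorphic to the irreducible module $V$ itself. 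By Theorem \ref{theorem degrees tensor modules} a degree-$2$ singular vector can occur only if this factor is $\Omega^1\cong\mfd^\ast$, and the hypothesis $V\ncong\mfd^\ast$ forbids it. In every case we contradict $\overline v\neq 0$, so $v=0$ and $sing\,\mfT^{14}(V)=\{0\}$.

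The step needing the most care is the descent of singularity to $\overline v$: one must verify that annihilation of $v$ by $\mfL_2$ forces the quotient action $\mfL_2\cdot\overline v$ to vanish, i.e.\ that no lower-odd-degree component of $v$ can feed into the odd-degree-$m_0$ part of $y\cdot v$. This is exactly where one uses that even elements of $\mfL$ cannot raise the odd degree, and it is the only substantive point; the remaining ingredients — the parity restriction on $m_0$, the triviality of $\bigwedge^{10}(\mfs)$ as an $\sld$-module, and the numerical inequality $(14-m_0)/2>2$ for $m_0<10$ — are routine.
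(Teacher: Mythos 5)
Your proof is correct and takes essentially the same route as the paper's: both descend through the filtration $\Gamma_\bullet(V)$, identify the quotients as tensor modules via Theorem \ref{theorem Gamma tensor}, and apply the degree bounds of Theorem \ref{theorem degrees tensor modules}, with the hypothesis $V\ncong\mfd^\ast$ entering only at the top layer, where the even degree is exactly $2$ and $\bigwedge^{10}(\mfs)\otimes V\cong V$. Your organization via the maximal nonzero odd component $m_0$ (instead of the paper's iteration downward from odd degree $10$), together with the explicit parity remark and the summand-by-summand application of Theorem \ref{theorem degrees tensor modules}, are only cosmetic differences.
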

\section{Bound refining}\label{sectionBoundRefining}
	 An extremely simple lemma will be extremely useful:
	 \begin{lem}\label{easylemma}
	 	Let $\mfT(V)$ be a Verma module for $\mfL$. If $v\in sing \, \mfT(V)$ and $\xi\in \mfL_{-1}\cong \mfs$, then $\xi v \in sing_{S(5)} \, \mfT(V)$.
	 \end{lem}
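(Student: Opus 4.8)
The plan is to first unwind what $sing_{S(5)}\,\mfT(V)$ means in terms of the $\mathbb{Z}$-grading of $\mfL$, and then to carry out a single bracket computation. As a Lie algebra the even part $\mfL_{(0)}=S(5)$ decomposes along the grading as $\mfL_{-2}\oplus\mfL_0\oplus\mfL_2\oplus\mfL_4\oplus\cdots$, and under the identifications of Remark \ref{remark verma modules as S(d) modules} and \eqref{isoS(n)} the filtration piece $\mathcal{S}_1$ corresponds precisely to the positive even part $\bigoplus_{j\geq 1}\mfL_{2j}$. Hence being singular for $S(5)$ amounts to being annihilated by every $\mfL_{2j}$ with $j\geq 1$, so the goal reduces to showing $y\cdot(\xi v)=0$ for all $y\in\mfL_{2j}$, $j\geq 1$.

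Before the computation I would record the standard fact that a singular vector is killed by all of $\mfL_+$, not merely by $\mfL_1$. Since $\mfL_j=\mfL_1^{\,j}$ for $j\geq 1$, an easy induction on $j$ using $\mfL_{j+1}=[\mfL_1,\mfL_j]$ together with $y(zv)=[y,z]v+z(yv)$ shows that $\mfL_1 v=0$ forces $\mfL_+ v=0$. This is the only input about $v$ that will be used.

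The computation itself is then immediate. Fix $y\in\mfL_{2j}$ with $j\geq 1$. Because $y$ is even there is no Koszul sign, and
\[
y\cdot(\xi v)=[y,\xi]\,v+\xi\,(y v).
\]
Now $[y,\xi]\in\mfL_{2j-1}$, and since $j\geq 1$ we have $2j-1\geq 1$, so $[y,\xi]\in\mfL_+$ and $[y,\xi]\,v=0$ by the previous paragraph; likewise $y\in\mfL_{2j}\subseteq\mfL_+$ gives $yv=0$, whence $\xi(yv)=0$. Therefore $y\cdot(\xi v)=0$ for every positive even $y$ simultaneously, which is exactly the assertion $\xi v\in sing_{S(5)}\,\mfT(V)$.

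There is essentially no obstacle here; the single point requiring care is the grading bookkeeping, namely the arithmetic observation that raising the degree $-1$ element $\xi$ by any positive even amount $2j$ lands in strictly positive degree $2j-1\geq 1$. It is worth remarking that this is exactly where the hypothesis $\xi\in\mfL_{-1}$ is used: if one instead multiplied $v$ by an element $w\in\mfL_{-2}\cong\mfd$, then for $y\in\mfL_2$ one would get $[y,w]\in\mfL_0\cong\sld$, which does not annihilate $v$, so $w v$ would in general fail to be singular for $S(5)$. The argument thus hinges precisely on the odd, depth-one layer $\mfL_{-1}$.
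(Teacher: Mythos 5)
Your proof is correct and is essentially the paper's own argument: the same one-line computation $y\cdot(\xi v)=[y,\xi]v+\xi(yv)$ with both terms killed by singularity of $v$. The paper states it only for $y\in\mfL_2$ (leaving implicit both that $\mfL_1 v=0$ forces $\mfL_+v=0$ and that checking $\mfL_2$ suffices), whereas you spell out these points by treating all $y\in\mfL_{2j}$, $j\geq 1$, directly — a matter of detail, not of method.
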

	\begin{proof}
		Take $y\in \mfL_{2}$. Then
		\[
		y \cdot (\xi v) = [y,\xi]\cdot v + \xi (y\cdot v)=0,
		\]
		where the second term is $0$ because $v$ is singular. The same goes for the first term since $[y,\xi]\in \mfL_{1}$.
	\end{proof}
	We apply this new piece of information to the case $V\cong \mfd^*$.\\
	Let $v\in sing \, \mfT^{14}(\mfd^{\ast})$. By the lemma, given any $\xi\in \mfs$, $\xi v$ (which has now degree $15$), is still singular for the action of $S(5)$. Consider the term of degree $(3 | 9)$ and the corresponding $\overline{ \xi v }\in \Gamma_{9}(V)/\Gamma_{8}(V)\cong T(\bigwedge^9(\mfs)\otimes V)$. Like before, it is still singular and has even degree $3$, which implies that $\overline{ \xi v }=0$. We can then consider $\overline{ \xi v }\in \Gamma_{7}(V)/\Gamma_{6}(V)$ and, iterating the argument, obtain that $\xi v$ must be $0$ $\forall \xi \in \mfs$.\\
	We remark that $\mfL_{-2}=[\mfL_{-1},\mfL_{-1}]$ which means that, for any $\der\in \mfd \cong \mfL_{-2}$, we can find $\xi_1,\xi_2\in\mfs\cong\mfL_{-1}$ such that $\der=[\xi_1,\xi_2]$. This in particular implies that $\der v =0$ $\forall \der\in \mfL_{-2}$. Since the action of $\mfL_{-2}$ on a tensor module is simply given by left multiplication, it can only mean that $v=0$.\\
	In conclusion, we showed that even if $V\cong \mfd^\ast$, $\mfT(V)$ cannot have singular vectors of degree 14.\\
	These ideas can be applied systematically to perform a refining of the bound in Theorem \ref{theo14}.
	
	\begin{theo}\label{raff}
		Let $\mfT(V)$ be a minimal Verma module and let $v\in sing \, \mfT(V)$. Then $v$ has degree $\leq 12$. More precisely:
		\begin{enumerate}
			\item if $V\ncong V([0,0,1,0])$, singular vectors have degree at most $11$;\\
			\item if $V\ncong V(\lambda)$ where $\lambda=[0,0,0,1],\, [0,0,1,0],\,[0,1,0,0], \, [1,0,0,0], \, [0,1,1,0],$ or $[1,0,0,1]$, singular vectors have degree at most $10$;\\
			\item $\begin{aligned}[t]
				if V\ncong V(\mu) \text{ where } \mu = &[0,0,0,0],\,[1,0,0,0],\,[0,0,0,1],\,[0,0,1,0],\, [0,1,0,0], \\
				 									   &[1,1,0,0], \, [0,1,1,0],\, [1,0,0,1], \,[0,0,1,1],\,[1,0,1,0], \\
													   &[0,1,0,1], \, [1,1,0,1],\,[0,2,0,0],\,[2,0,0,0], \, [1,0,2,0], \\
													    &\text{or } [3,0,0,1], \text{ singular vectors have degree at most $9$.}
			\end{aligned}$
		\end{enumerate}
	\end{theo}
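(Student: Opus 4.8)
The plan is to iterate the descent already performed above for degree $14$, now systematically and reinforced by Lemma \ref{easylemma}. The backbone is the following master step. Suppose $0\neq v\in sing\,\mfT^p(V)$ and let $m$ be the largest odd-degree index occurring in $v$, so that $m\equiv p\pmod 2$ and $v\in\Gamma_m(V)$. Passing to the quotient, $\overline v\in\Gamma_m(V)/\Gamma_{m-1}(V)\cong T_S(\bigwedge^m(\mfs)\otimes V)$ is a nonzero $S(5)$-singular vector of even degree $(p-m)/2$, exactly as in the proof of Theorem \ref{theo14}. Theorem \ref{theorem degrees tensor modules} then forces $(p-m)/2\leq 2$; moreover the value $2$ can occur only if $\Omega^1$ is an $\sld$-summand of $\bigwedge^m(\mfs)\otimes V$, and the value $1$ only if some $\Omega^n$ is a summand (since induction is additive, $T_S$ of a reducible $\sld$-module splits into the tensor modules of its constituents, and only the $\Omega^n$-constituents carry positive-degree singular vectors). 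If neither condition is met at the top odd degree $m$, then $\overline v=0$, hence $v\in\Gamma_{m-2}(V)$, and the step repeats with $m$ replaced by $m-2$. A nonzero singular vector of degree $p$ therefore survives only if, for some admissible $m$, the $\sld$-module $\bigwedge^m(\mfs)\otimes V$ contains the $\Omega^n$ prescribed by the value of $(p-m)/2$.

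Second, detecting these $\Omega^n$-summands is a purely $\sld$-representation-theoretic task. Since the top exterior power $\bigwedge^{10}(\mfs)$ is the trivial module, one has $(\bigwedge^m\mfs)^*\cong\bigwedge^{10-m}\mfs$, and by Schur reciprocity $\Omega^n$ is a summand of $\bigwedge^m(\mfs)\otimes V$ precisely when $V$ is an $\sld$-constituent of $\Omega^n\otimes\bigwedge^{10-m}(\mfs)$. I would therefore precompute the $\sld$-decompositions of the exterior powers $\bigwedge^k(\mfs)=\bigwedge^k(\Omega^2)$ for the relevant $k$ and of their tensor products with $\Omega^0,\dots,\Omega^4$. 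Feeding in $m=p-4,\ n=1$ (the even-degree-$2$ case, giving the constituents of $\Omega^1\otimes\bigwedge^{14-p}(\mfs)$) and $m=p-2,\ n\in\{0,\dots,4\}$ (the even-degree-$1$ case, giving the constituents of $\Omega^n\otimes\bigwedge^{12-p}(\mfs)$) yields, for each $p$, a finite list of weights $\lambda$ not immediately excluded. This is the origin of the explicit lists in items (1)--(3).

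Third comes the refinement from degree $\leq 14$ to degree $\leq 12$, and the trimming of these crude lists, both via the lifting step. By Lemma \ref{easylemma}, for every $\xi\in\mfs$ the vector $\xi v\in sing_{S(5)}\,\mfT^{p+1}(V)$ is an $S(5)$-singular vector one degree higher, to which the master step applies at the opposite parity. Among the components of $\xi v$, those of even degree $\geq 3$ vanish automatically, while those of even degree $1$ or $2$ vanish unless the corresponding $\Omega^n$-condition holds; if all of them are thereby forced to vanish, then $\xi v=0$ for all $\xi$, and the relation $\mfL_{-2}=[\mfL_{-1},\mfL_{-1}]$ together with the fact that $\mfd\cong\mfL_{-2}$ acts on a tensor module by injective left multiplication gives $v=0$. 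For $p=14$ every even degree of $\xi v$ (of degree $15$) already exceeds $2$, so this reproduces the computation carried out above; for $p=13$ the master step leaves only $V$ among the constituents of $\Omega^1\otimes\Omega^2\cong V([1,1,0,0])\oplus V([0,0,1,0])$, and since neither is $\Omega^1$, the only low-even-degree component of the lift $\xi v$ (of degree $14$, the term of degree $(2\mid 10)$) is killed while all lower components have even degree $\geq 3$, so $\xi v=0$ and both cases are eliminated. This establishes the bound $12$.

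Finally, items (1)--(3) are obtained by running the combined master-plus-lifting analysis at $p=12$, $p=11$ and $p=10$ respectively and recording the surviving weights; in particular at $p=12$ the $\Omega^n$-candidates $\Omega^0,\Omega^1,\Omega^2,\Omega^4$ are removed by the lift while $\Omega^3\cong V([0,0,1,0])$ survives, matching item (1). I expect the main obstacle to be twofold. First, the representation-theoretic bookkeeping: computing the exterior powers $\bigwedge^k(\mfs)$ and the relevant tensor decompositions accurately enough to pin the exceptional weights down exactly --- this is delicate and is where the long list of item (3) originates. Second, one must verify that the lift genuinely removes a candidate rather than merely constraining it, i.e.\ that $\xi v$ is forced to vanish for all $\xi$ simultaneously; this requires tracking how the top-odd-degree components of $\xi v$ relate to those of $v$ across the two parities. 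The interplay between the even-degree cap (which bounds $p$) and the parity-switching lift (which supplies the leverage to descend from $14$ to $12$ and to prune the lists) is the delicate core of the argument.
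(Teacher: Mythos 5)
Your proposal is correct and takes essentially the same route as the paper's own proof: the paper runs exactly your master step (the top odd-degree component viewed in $\Gamma_m(V)/\Gamma_{m-1}(V)\cong T_S(\bigwedge^m(\mfs)\otimes V)$, with Theorem \ref{theorem degrees tensor modules} capping the even degree at $2$, resp.\ at $1$, unless $\Omega^1$, resp.\ some $\Omega^n$, is a constituent), the same Frobenius-reciprocity reduction to a precomputed table of decompositions of $\bigwedge^j(\mfs^\ast)\otimes V(\omega_i)$ (equivalent to your $\Omega^n\otimes\bigwedge^{10-m}(\mfs)$ via $\bigwedge^j(\mfs^\ast)\cong\bigwedge^{10-j}(\mfs)$), and the same lift via Lemma \ref{easylemma} combined with $\mfL_{-2}=[\mfL_{-1},\mfL_{-1}]$ and injectivity of left multiplication to force $v=0$, executed case by case at $p=13,12,11,10$. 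Your degree-$13$ and $p=12$ analyses match the paper's verbatim, and the explicit lists in items (1)--(3) are precisely the surviving weights your bookkeeping scheme would record.
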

	The proof revolves around arguments similar to the previous ones. To that end, because of Theorem \ref{theorem Gamma tensor} and Theorem \ref{theorem degrees tensor modules}, we will need to be able to determine when, given an irreducible $\sld$-module $V$, we can (or rather cannot) find a copy of $V(\omega_i)$ in $\bigwedge^j(\mfs)\otimes V$.\\
	Recall that every irreducible $\sld$-module $V$ has a highest weight vector and that $V$ is uniquely determined by the highest weight. Here, as before, $\omega_1=[1,0,0,0]$, $\omega_2=[0,1,0,0]$, $\omega_3=[0,0,1,0]$, $\omega_4 = [0,0,0,1]$ and $\omega_0 = [0,0,0,0]$.\\
	By Frobenius duality (keeping in mind that these are all finite-dimensional modules), $V(\omega_i)\subseteq \bigwedge^j(\mfs)\otimes V$ if and only if $V(\omega_0)\subseteq \bigwedge^j(\mfs)\otimes V \otimes V(\omega_i)^*$ se e solo se $V^*\subseteq \bigwedge^j(\mfs)\otimes V(\omega_i)^*$ se e solo se $V\subseteq \bigwedge^j(\mfs^*)\otimes V(\omega_i)$.\\
	In the following table we list the highest weights of the irreducible representations that appear in the decomposition of those tensor products. It was obtained using computer software "LiE" ( see \cite{LiE} for further informations).\\	
	
	\begin{tabular}{|c|p{3cm}|p{3cm}|p{3cm}|p{3cm}|} 
		\hline
		& $\bigwedge^9(\mfs^*)\otimes V(\omega_i)$ & $\bigwedge^8(\mfs^*)\otimes V(\omega_i)$ & $\bigwedge^7(\mfs^*)\otimes V(\omega_i) $
		& $\bigwedge^6(\mfs^*)\otimes V(\omega_i)$\\
		\hline
		i=0 & [0,1,0,0] & [1,0,1,0] & [0,0,2,0], [2,0,0,1] & [1,0,1,1], [3,0,0,0]\\
		\hline
		i=1 & [0,0,1,0], [1,1,0,0] & [0,1,1,0], [1,0,0,1], [2,0,1,0] & [0,0,1,1], [1,0,2,0], [1,1,0,1], [2,0,0,0], [3,0,0,1] & [0,1,1,1], [1,0,0,2], [1,0,1,0], [2,0,1,1], [2,1,0,0], [4,0,0,0] \\
		\hline
		i=2 &[0,0,0,1], [0,2,0,0], [1,0,1,0] & [0,0,2,0], [0,1,0,1], [1,0,0,0], [1,1,1,0], [2,0,0,1] & [0,0,1,0], [0,1,2,0], [1,0,1,1], [1,1,0,0], [2,1,0,1], [3,0,0,0] & [0,0,2,1], [0,1,0,2], [0,1,1,0], [1,0,0,1], [1,1,1,1], [2,0,0,2], [2,0,1,0], [3,1,0,0]\\
		\hline
		i=3 &[0,0,0,0], [0,1,1,0], [1,0,0,1] &[0,0,1,1], [0,1,0,0], [1,0,2,0], [1,1,0,1], [2,0,0,0] & [0,0,3,0], [0,1,1,1], [1,0,0,2], [1,0,1,0], [2,0,1,1], [2,1,0,0] & [0,0,1,2], [0,0,2,0], [0,1,0,1], [1,0,2,1], [1,1,0,2], [1,1,1,0], [2,0,0,1], [3,0,1,0]\\
		\hline
		i=4 & [0,1,0,1], [1,0,0,0] & [0,0,1,0], [1,0,1,1], [1,1,0,0] & [0,0,2,1], [0,1,1,0], [1,0,0,1], [2,0,0,2], [2,0,1,0] & [0,0,1,1], [1,0,1,2], [1,0,2,0], [1,1,0,1], [2,0,0,0], [3,0,0,1]\\
		\hline
	\end{tabular}
	\begin{proof}[Proof of Theorem \ref{raff}.] We will outline the various steps in a schematic way. The ideas are the same we have already discussed.\\
		\textbf{Degree 13:}\\
		Let $v\in sing \, \mfT^{13}(V)$. We first consider the term of $v$ in degree $(2|9)$; it can be different from zero only if $V$ appears in the decomposition of $\bigwedge^{9}(\mfs^\ast)\otimes V(\omega_1)\cong V([0,0,1,0])\oplus V([1,1,0,0])$. So if $V$ is not isomorphic to one of these two representations, the term in $(2|9)$ of $v$ must be equal to $0$, we look at next term, which is also $0$ because has degree $(3|7)$. Iterating, we deduce that $v=0$.\\
		If $V\cong V([0,0,1,0])$ or $V([1,1,0,0])$, take $\xi\in\mfs$ and consider the term in degree $(2|10)$ of $\xi v$. It can be non zero only if $V$ does not appear in $\bigwedge^{10}(\mfs^\ast)\otimes V(\omega_1) \cong V(\omega_1)$. It follows that it must be $0$ and, iterating, so does $\xi v$  $\forall \xi \in \mfs$ which in turn implies, as we already saw, that $v$ must be $0$. In conclusion, $sing \, \mfT^{13}(V)$ is always $0$.\\
		\textbf{Degree 12:}\\
		Let $v\in sing \, \mfT^{12}(V)$. We consider the term of degree $(1|10)$; it can be non zero only if $V$ appears in $\bigwedge^{10}(\mfs^\ast)\otimes V(\omega_i)\cong\omega_i$ for $i=0,\dotsc, 4$. Therefore if $V$ is not of the form $V(\omega_i)$, we check the term of degree $(2|8)$ which must be $0$ unless a copy of $V$ appears in $\bigwedge^8(\mfs^\ast)\otimes V(\omega_1)= V([0,1,1,0])\oplus V([1,0,0,1]) \oplus V([2,0,1,0])$. In the remaining cases $v=0$.\\
		Now assume $V\cong V([0,1,1,0])$, $V([1,0,0,1])$, $V([2,0,1,0])$ or $V(\omega_i)$ with $i=0,\dotsc, 4$.\\
		Take $\xi\in\mfs$ and consider the term of $\xi v$ of degree $(2|9)$. It cannot be non zero if $V$ does not appear in $\bigwedge^{9}(\mfs^\ast)= V([0,0,1,0])\oplus V([1,1,0,0])$. So if $V\ncong V(\omega_3)$ $\xi v=0$ $\forall \xi \in \mfs$ and again it implies that $v=0$. Therefore the only case in which we cannot rule out the presence of singular vectors of degree $12$ is for $V\cong V(\omega_3)$.\\
		\textbf{Degree 11:}
		Let $v\in sing \, \mfT^{11}(V)$. We consider the term of degree $(1 | 9)$; it can be different from zero when $V$ appears in $\bigwedge^{9}(\mfs^\ast)\otimes V(\omega_i)$ for $i=0,\dotsc, 4$. It happens when $V$ has as highest weight one belonging to the first column of the table.\\
		If we assume $V$ is not isomorphic to any of them, we can move to the next term which has degree $(2|7)$. According to the table, if $V$ is also not isomorphic to $V([0,0,1,1])$, $V([1,0,2,0])$, $V([1,1,0,1])$, $V([2,0,0,0])$ and $V([3,0,0,1])$, $v=0$.\\
		Assume $V$ is isomorphic to one of these representations, a fundamental representation or the trivial one; take $\xi\in\mfs$ and check the term of degree $(1|10)$; if $V$ is not a fundamental representation or the trivial one, we can move to the term of degree $(2|8)$ which will be $0$ unless a copy of $V$ appears in $\bigwedge^8(\mfs^\ast)\otimes V(\omega_1) \cong V([0,1,1,0])\oplus V([1,0,0,1])\oplus V([2,0,1,0])$. Therefore if $V$ is not isomorphic to $V([0,0,0,1])$, $V([0,0,1,0])$, $V([0,1,0,0])$, $V([1,0,0,0])$, $V([0,1,1,0])$ or $V([1,0,0,1])$, $sing \, \mfT^{11}(V)=0$.\\
		\textbf{Degree 10:}\\
		Let $v\in sing \, \mfT^{10}$. The term of $v$ with the greatest odd degree is the one of degree $(0|10)$. In this case we cannot deduce anything, since this term is constant from the point of view of $S(5)$, therefore singular.\\
		We relay again on Lemma \ref{easylemma}: take $\xi\in \mfs$ and consider $\xi v$, which has now degree $11$. This time we can still look at $\overline{ \xi v }\in \Gamma_{9}/(V)\Gamma_{8}(V)\cong T(\bigwedge^9(\mfs)\otimes V))$ which has even degree $1$. Again, we know that it is $0$ unless $V$ appears as an irreducible of $\bigwedge^9(\mfs^\ast)\otimes V(\omega_i)$, $i=0\dotsc, 5$. So if the highest weight of $V$ does not appear in the first column of the table, $\overline{ \xi v }=0$. We can then move to the next term, which has degree $(2|7)$. Here we look at the irreducible modules in $\bigwedge^7(\mfs^\ast)\otimes V(\omega_1)\cong V([0,0,1,1])\oplus V([1,0,2,0])\oplus V([1,1,0,1])\oplus V([2,0,0,0])\oplus V([3,0,0,1])$. So if in addition we ask that $V$ is not isomorphic to these modules, $\xi v=0$ $\forall \xi\in\mfs$ and again we cannot have singular vectors of degree $10$ in $\mfT(V)$.
	\end{proof}

\end{document}